
\documentclass[preprint,3p,number]{elsarticle}




\usepackage{amssymb,amsfonts,amsopn,amsthm,amsmath}
\usepackage{graphicx,epstopdf}
\usepackage{tabularx,booktabs,ctable,threeparttable}
\usepackage{multirow}
\usepackage{algorithm,algorithmic}
\usepackage{caption}
\usepackage{subcaption}
\usepackage{lineno}
\usepackage{marvosym}
\usepackage{extarrows}
\usepackage{blkarray}
\usepackage[colorlinks,citecolor=blue,urlcolor=blue]{hyperref}

\newtheorem{Def}{Definition}[section]
\newtheorem{prop}{Proposition}[section]

\newtheorem{Lem}{Lemma}[section]

\numberwithin{equation}{section}
\newtheorem{theorem}{Theorem}[section]
\newtheorem{remark}{Remark}[section]
\numberwithin{Lem}{section}
\numberwithin{Def}{section}
\numberwithin{prop}{section}

\let\oldequation\equation
\let\oldendequation\endequation
\renewenvironment{equation}
{\linenomathNonumbers\oldequation}
{\oldendequation\endlinenomath}

\let\oldalign\align
\let\oldendalign\endalign
\renewenvironment{align}
{\linenomathNonumbers\oldalign}
{\oldendalign\endlinenomath}

\begin{document}
\date{}


\begin{frontmatter}


\title{Backward error analysis of the Lanczos bidiagonalization with reorthogonalization} 
\author[1,2]{Haibo Li \corref{cor1}}
\ead{haibolee1729@gmail.com}

\author[2]{Guangming Tan \fnref{fn2}}
\ead{tgm@ict.ac.cn}

\author[2]{Tong Zhao}
\ead{zhaotong@ict.ac.cn}

\affiliation[1]{organization={Computing System Optimization Lab, Huawei Technologies},
	city={Beijing},
	postcode={100094},
	country={China}}
\affiliation[2]{organization={Institute of Computing Technology, Chinese Academy of Sciences},
	city={Beijing},
	postcode={100190},
	country={China}}

\cortext[cor1]{Corresponding author}
\fntext[fn2]{This author is supported in part by the National Natural Science Foundation of China under Grant No. 62032023.}

\begin{abstract}
	The $k$-step Lanczos bidiagonalization reduces a matrix $A\in\mathbb{R}^{m\times n}$ into a bidiagonal form $B_k\in\mathbb{R}^{(k+1)\times k}$ while generates two orthonormal matrices $U_{k+1}\in\mathbb{R}^{m\times (k+1)}$ and $V_{k+1}\in\mathbb{R}^{n\times {(k+1)}}$. However, any practical implementation of the algorithm suffers from loss of orthogonality of $U_{k+1}$ and $V_{k+1}$ due to the presence of rounding errors, and several reorthogonalization strategies are proposed to maintain some level of orthogonality. In this paper, by writing various reorthogonalization strategies in a general form we make a backward error analysis of the Lanczos bidiagonalization with reorthogonalization (LBRO). Our results show that the computed $B_k$ by the $k$-step LBRO of $A$ with starting vector $b$ is the exact one generated by the $k$-step Lanczos bidiagonalization  of $A+E$ with starting vector $b+\delta_{b}$ (denoted by LB($A+E,b+\delta_{b}$)), where the 2-norm of perturbation vector/matrix $\delta_{b}$ and $E$ depend on the roundoff unit and orthogonality levels of $U_{k+1}$ and $V_{k+1}$. The results also show that the 2-norm of $U_{k+1}-\bar{U}_{k+1}$ and $V_{k+1}-\bar{V}_{k+1}$ are controlled by the orthogonality levels of $U_{k+1}$ and $V_{k+1}$, respectively, where $\bar{U}_{k+1}$ and $\bar{V}_{k+1}$ are the two orthonormal matrices generated by the $k$-step LB($A+E,b+\delta_{b}$) in exact arithmetic. Thus the $k$-step LBRO is mixed forward-backward stable as long as the orthogonality of $U_{k+1}$ and $V_{k+1}$ are good enough. We use this result to investigate the backward stability of LBRO based SVD computation algorithm and LSQR algorithm. Numerical experiments are made to confirm our results.
\end{abstract}


\begin{keyword}
Lanczos bidiagonalization \sep rounding error \sep reorthogonalization \sep backward error analysis \sep Householder transformation \sep singular value decomposition \sep LSQR

\MSC 15A23 \sep 65F25 \sep 65G50
\end{keyword}
\end{frontmatter}

\section{Introduction}\label{sec1}
In \cite{Golub1965}, Golub and Kahan propose an algorithm for reducing an arbitrary rectangle matrix to upper bidiagonal form, which is the first step for computing the singular value decomposition(SVD) of the given matrix. Later in \cite{Paige1982}, Paige and Saunders propose an algorithm which reduces a matrix to lower bidiagonal form iteratively, based on which they propose the most widely used LSQR algorithm for solving large sparse least squares problems. Although both the upper and lower bidiagonal reductions can be directly achieved by Householder transformations \cite{Golub2013}, for large sparse or structured matrices, one often need to reduce the matrix to a partial upper or lower bidiagonal form iteratively, which can be obtained by using Lanczos-type procedures. Such two procedures are proposed in \cite{Paige1982}, which are called BIDIAG-1 and BIDIAG-2 corresponding to the lower and upper bidiagonal reductions, respectively. For these reasons, the two above procedures are usually called Golub-Kahan-Lanczos bidiagonalizations or simply Lanczos bidiagonalizations, and we focus on the lower Lanczos bidiagonalization in this paper.

Given a matrix $A \in \mathbb{R}^{m\times n}$ with $m\geq n$. First choose a nonzero $b\in\mathbb{R}^{m}$ as the starting vector and compute
\[ \beta_{1}u_1 = b, \ \ \ \alpha_{1}v_{1}=A^{T}u_{1}\]
such that $\|u_{1}\|=\|v_{1}\|=1$ and $\beta_{1},\alpha_{1}>0$.  Throughout the rest of the paper $\|\cdot\|$ always denotes either the vector or matrix 2-norm. Then for $i=1,2,\dots$, the Lanczos bidiagonalization computes vectors $u_{i+1}$ and $v_{i+1}$ of unit length using the recurrences
\begin{align*}
	& \beta_{i+1}u_{i+1} = Av_{i} - \alpha_{i}u_{i} ,  \\
	&  \alpha_{i+1}v_{i+1} = A^{T}u_{i+1} - \beta_{i+1}v_{i} , 
\end{align*}
where $\alpha_{i+1}, \beta_{i+1}>0$. If $\alpha_{i+1}$ or $\beta_{i+1}$ is zero, then the procedure terminates, having found an invariant singular subspace of $A$, and this is usually called ``lucky terminate" \cite{Golub1965}. In the paper we suppose that the procedure does not terminate after $k$ steps. In exact arithmetic the Lanczos bidiagonalization computes two group of mutual orthogonal vectors $\{u_{1}, \dots, u_{k+1}\}$ and $\{v_{1} \dots, v_{k+1}\}$ which are called Lanczos vectors, and a lower bidiagonal matrix
\[ B_k = \begin{pmatrix}
		\alpha_{1} & & & \\
		\beta_{2} &\alpha_{2} & & \\
		&\beta_{3} &\ddots & \\
		& &\ddots &\alpha_{k} \\
		& & &\beta_{k+1}
	\end{pmatrix} \in \mathbb{R}^{(k+1)\times k} .\]
It can be proved that $\{u_{1},\dots,u_{k+1}\}$ and $\{v_{1},\dots,v_{k+1}\}$ are orthonormal bases of Krylov subspaces $\mathcal{K}_{k+1}(AA^{T},b)$ and $\mathcal{K}_{k+1}(A^{T}A,A^{T}b)$, respectively \cite{Larsen1998}. The fundamental relations of the $k$-step Lanczos bidiagonalization can be written in the matrix form
\begin{align*}
	& U_{k+1}(\beta_{1}e_{1}^{(k+1)}) = b , \\
	& AV_{k} = U_{k+1}B_{k} , \\
	& A^{T}U_{k+1} = V_{k}B_{k}^{T}+\alpha_{k+1}v_{k+1}(e_{k+1}^{(k+1)})^{T} .
\end{align*}
where $e_{i}^{(k+1)}$ is the $i$-th column of the identity matrix of order $k+1$, and $U_{k+1} = (u_{1}, \dots, u_{k+1})$ and $V_{k+1}=(v_{1},\dots, v_{k+1})$ are two orthonormal matrices. 

The Lanczos bidiagonalization is widely used for designing efficient algorithms for many type of large sparse or structured matrix problems. Note that $B_{k}$ is the Ritz-Galerkin projection of $A$ on the subspaces $\mathcal{K}_{k+1}(AA^{T},b)$ and $\mathcal{K}_{k}(A^{T}A,A^{T}b)$. Therefore, the extreme singular values and corresponding vectors of $A$ can be well approximated by the SVD of $B_{k}$ \cite{Golub1965}. It is shown in \cite{Simon2000} that good low-rank approximations of matrices can be directly obtained from the Lanczos bidiagonalization applied to the given matrix without computing any SVD. For large sparse least squares problems of the form $\min_{x\in\mathbb{R}^{n}}\|Ax-b\|$, the most commonly used LSQR solver is also based on the Lanczos bidiagonalization. Furthermore, for discrete linear ill-posed inverse problems, several Lanczos bidiagonalization based iterative regularization algorithms are developed for solving large sparse problems, where a regularized solution is obtained by a proper early stopping criterion or by a hybrid regularization method; see \cite{Bjorck1988,Hansen1998,Hansen2010} for further discussions.

In finite precision arithmetic, due to the influence of rounding errors, the Lanczos vectors computed by the Lanczos bidiagonalization gradually lose their mutual orthogonality as the iteration progresses \cite{Golub1965,Larsen1998}. This is a typical phenomenon that appears in the Lanczos-type algorithms, which is first observed in the symmetric Lanczos process used for computing some extreme eigenvalues and eigenvectors of a symmetric matrix \cite{Lanczos1950}. The loss of orthogonality of Lanczos vectors will lead to a delay of convergence in the computation of eigenvalues and eigenvectors, and sometimes it is also difficult to determine whether some computed approximations are additional copies or genuine close eigenvalues \cite{Paige1971, Paige1976, Paige1980}. The above properties can be adapted to handle the Lanczos bidiagonalization since the Lanczos bidiagonalization of $A$ with staring vector $b$ can be written as the symmetric Lanczos process of $\begin{pmatrix} O & A \\ A^{T} & O \end{pmatrix}$ with staring vector $\begin{pmatrix} b \\ 0 \end{pmatrix}$ \cite{Bjorck1996}. On the other hand, when using the LSQR to solve least squares problems, the loss of orthogonality may cause the algorithm requiring much more iterations to converge; the finite precision behavior of LSQR is very similar to the closely related conjugate gradient (CG) algorithm based on symmetric Lanczos process, and we refer to \cite{Green1992,Meurant2006,Green2021}. For discrete linear inverse problems, the Lanczos bidiagonalization based regularization algorithms also suffers from the delay of convergence of regularized solutions, which can make the propagation of noise during iterations rather irregular \cite{Hnetyn2009}. For these reasons, the Lanczos bidiagonalization is usually performed with reorthogonalization for solving least squares problems and discrete linear inverse problems. There are several reorthogonalization strategies proposed to maintain some level of orthogonality, such as partial reorthogonalization \cite{Larsen1998} and one-sided reorthogonalization \cite{Simon2000}. 

It is well known that algorithms based on a sequence of Householder transformations have very good stability properties \cite{Higham2002}, and these properties have been used to show that the Householder (upper) bidiagonal reduction is mixed forward-backward stable \cite[Theorem A2]{Byers2008}. For Lanczos bidiagonalization with a reorthogonalization strategy, however, very little is known about the numerical stability of it. As far as we know, the only one result is about the one-sided reorthogonalization, which states that the process applied to a matrix $C$ in finite precision arithmetic produces Krylov subspaces generated by a nearby matrix $C+E_1$, where $E_1$ is an error matrix \cite{Barlow2013}. In this paper, we write those various types of reorthogonalization strategies in a general form that at each iteration $u_k$ and $v_k$ are reorthogonalized against some previous vectors among $\{u_1,\dots, u_{k-1}\}$ and $\{v_1,\dots, v_{k-1}\}$, respectively. Note that some vectors may not be included, which means that they are not used in the reorthogonalization step. Using this form, we can analyze numerical stability of the Lanczos bidiagonalization with reorthogonalization (LBRO), which is the main purpose of this paper. 

In this paper, we give a backward error analysis of the LBRO and generalize the result of \cite{Barlow2013}. Denote the roundoff unit by $\mathbf{u}$. We first establish a relationship between the LBRO and Householder transformation based bidiagonal reduction. Based on this result, we show that for the $k$-step LBRO of $A$ with starting vector $b$ (denoted by LBRO($A,b$)): (1). the computed $B_k$ is the exact one generated by the $k$-step LB($A+E,b+\delta_{b}$), where the perturbation quantity $\|\delta_{b}\|/\|b\|=O(\mathbf{u})$ and $\|E\|/\|A\|$ is controlled by $\mathbf{u}$ and orthogonality levels of $U_{k+1}$ and $V_{k+1}$; (2). if we denote the two orthonormal matrices generated by LB($A+E,b+\delta_{b}$) in exact arithmetic by $\bar{U}_{k+1}$ and $\bar{V}_{k+1}$, respectively, then $\|U_{k+1}-\bar{U}_{k+1}\|$ and $\|V_{k+1}-\bar{V}_{k+1}\|$ are controlled by the orthogonality levels of $U_{k+1}$ and $V_{k+1}$, respectively. Compared with \cite[Theorem 5.2]{Barlow2013} that can only deal with the $n$-step procedure with one-sided reorthogonalization, our result can apply to the $k$-step LBRO for $1\leq k \leq n$. Following Higham \cite[\S 1.5]{Higham2002}, our result implies that the $k$-step LBRO is mixed forward-backward stable as long as the orthogonality of $U_{k+1}$ and $V_{k+1}$ are good enough. We then use this result to investigate backward stability of LBRO based algorithms including SVD computation and LSQR.

The paper is organized as follows. In Section \ref{sec2}, we review reorthogonalization strategies for  the Lanczos bidiagonalization and give some properties.  In Section \ref{sec3}, we first establish a relationship between the LBRO and Householder transformation based bidiagonal reduction, then we derive mixed backward-forward error bounds for the LBRO. In Section \ref{sec4}, our result is applied to discuss backward stability of LBRO based SVD computation algorithm and LSQR. In Section \ref{sec5}, we use some numerical examples to illustrate the results. Finally, we conclude the paper in Section \ref{sec6}.

Throughout the paper, we denote by $I_{k}$ and $O_{k\times l}$ the identity matrix of order $k$ and zero matrix of order $k\times l$, respectively, by $e_{i}^{(k)}$ the $i$-th column of $I_k$ and by $0_{l}$ the zero vector of dimension $l$.

\section{The Lanczos bidiagonalization and reorthogonalization strategies}\label{sec2}
In this section, we review the Lanczos bidiagonalization in finite precision arithmetic and reorthogonalization strategies. From now on, quantities $\alpha_{i}$, $u_i$, $B_k$, etc. denote the computed ones in finite precision arithmetic. Several types of reorthogonalization strategies have been proposed for maintaining some level of orthogonality of Lanczos vectors \cite{Larsen1998,Simon2000,Barlow2013}, all of which can be written in the following form.

Suppose that at the $i$-th step, the bidiagonalization procedure have computed 
\begin{equation}\label{2.1}
	\beta_{i+1}^{'}u_{i+1}^{'}=Av_{i}-\alpha_{i}u_{i}-f_{i}^{'},
\end{equation}
where $u_{i+1}^{'}$ and $\beta_{i+1}^{'}$ are two temporary quantities and $f_{i}^{'}$ is a rounding error term. A reorthogonalization strategy applied to $u_{i+1}$ means that we choose $i$ real numbers $\xi_{1i}, \dots, \xi_{ii}$ and form
\begin{equation}\label{2.2}
	\beta_{i+1}u_{i+1} =\beta_{i+1}^{'}u_{i+1}^{'}-\sum\limits_{j=1}^{i}\xi_{ji}u_{j}-f_{i}^{''} ,
\end{equation}
where $f_{i}^{''}$ is a rounding error term. Then the algorithm will be continued with $u_{i+1}$ and $\beta_{i+1}$ instead of $u_{i+1}^{'}$ and $\beta_{i+1}^{'}$. The reorthogonalization step of $u_{i+1}$ aims to maintain some level of orthogonality among $u_{i+1}$ and $u_{j}$ for $j=1,\dots,i$. In \eqref{2.2}, the choice of coefficients $\xi_{1i}, \dots, \xi_{ii}$ varies from different types of reorthogonalization strategies, and some values of the coefficients may be zero, which means that the corresponding Lanczos vectors are not used in the reorthogonalization step. 

Combining \eqref{2.1} and \eqref{2.2}, the reorthogonalization step of $u_{i+1}$ can be written as the recurrence
\begin{equation}\label{2.3}
	Av_{i}=\alpha_{i}u_{i}+\beta_{i+1}u_{i+1}+ \sum\limits_{j=1}^{i}\xi_{ji}u_{j}+f_{i} ,
\end{equation}
where $f_{i}=f_{i}^{'}+f_{i}^{''}$ is the rounding error term. The reorthogonalization step of $v_{i}$ is similar to that of $u_{i+1}$ and can be written as the recurrence
\begin{equation}\label{2.4}
	A^{T}u_{i}=\alpha_{i}v_{i}+\beta_{i}v_{i-1}+
	\sum\limits_{j=1}^{i-1}\eta_{ji}v_{i}+g_{i} ,
\end{equation}
where $g_{i}$ is the rounding error term. Note that for $i=1$ reorthogonalization of $v_{1}$ is not needed and $v_{0}$ is a zero vector.

\begin{remark}\label{remark2.1}
	For matrix $A \in \mathbb{R}^{m\times n}$ with $m=n$, the Lanczos bidiagonalization must terminate at $k=n$ in exact arithmetic. In finite precision arithmetic, however, $\beta_{n+1}$ is usually nonzero. In this case, the computation of $u_{n+1}$ does not make any sense since $U_{n+1}=(U_{n},u_{n+1})\in \mathbb{R}^{n\times (n+1)}$ has deficient column rank, and thus the LBRO should not reorthogonalize $u_{n+1}$ if $m=n$. In practice, the Lanczos bidiagonalization is usually performed in $k\ll n$ steps. 
\end{remark}

Now we state a set of properties and assumptions on the finite precision behaviors of the Lanczos bidiagonalization, which are from the results of rigorous analysis of the symmetric Lanczos process and Lanczos bidiagonalization \cite{Paige1976, Parlett1998, Simon1984a, Larsen1998, Simon2000}. They constitute a model for the actual computations and include essential features while discard irrelevant ones. First, by \eqref{2.3} and \eqref{2.4} the Lanczos bidiagonalization with reorthogonalization can be written in matrix form
\begin{align}
	& AV_{k}=U_{k+1}(B_{k}+C_{k})+F_{k}  , \label{2.5} \\
	& A^{T}U_{k+1}=V_{k}(B_{k}^{T}+D_{k})+\alpha_{k+1}v_{k+1}(e_{k+1}^{(k+1)})^{T}+G_{k+1}  , \label{2.6}
\end{align}
where
\[ C_{k}=\begin{pmatrix}
		\xi_{11} &\xi_{12} &\dots &\xi_{1k} \\
		0& \xi_{22}&\cdots &\xi_{2k} \\
		& 0&\ddots &\vdots \\
		& &\ddots  &\xi_{kk} \\
		& & &0
	\end{pmatrix} \in \mathbb{R}^{(k+1)\times k}, \ \
	D_{k}=\begin{pmatrix}
		0 &\eta_{12} &\eta_{13} &\cdots &\eta_{1k} &\eta_{1k+1} \\
		&0 &\eta_{23} &\eta_{24} &\cdots &\eta_{2k+1} \\
		& &\ddots &\ddots &\ddots &\vdots \\
		& & &\ddots &\eta_{k-1,k} &\eta_{k-1,k+1} \\
		& & & &0 & \eta_{k,k+1}
	\end{pmatrix} \in \mathbb{R}^{k \times (k+1)},\]
and $F_{k}=(f_{1}, \dots,f_{k})$ and $G_{k+1}=(g_{1},\dots,g_{k+1})$, satisfying $\lVert F_k\lVert, \lVert G_{k+1}\lVert = O(\lVert A\lVert \mathbf{u})$ \cite{Simon1984a,Parlett1998}. Note that the rounding error terms in \eqref{2.1} and \eqref{2.2} are also satisfied $\|f_{i}^{'}\|, \|f_{i}^{''}\|=O(\lVert A\lVert \mathbf{u})$ \cite{Simon1984a}. Then, we always assume that the computed Lanczos vectors are of unit length in order to simplify rounding error analysis. Finally, we assume that
\begin{equation*}\label{2.11}
	\mbox{no} \ \alpha_{i} \ \mbox{and} \ \beta_{i} \ \mbox{ever become negligible},
\end{equation*}
which is almost always true in practice, and the rare cases where $\alpha_{i}$ or $\beta_{i}$ do become very small are actually the lucky ones, since then the algorithm should be terminated, having found an invariant singular subspace \cite{Simon1984a}. 

Following \cite{Barlow2013,Paige2009}, we define the orthogonality level of Lanczos vectors as follows. 
\begin{Def}
	Let $\mathbf{SUT(\cdot)}$ denotes the strictly upper triangular part of a matrix. The orthogonality level of Lanczos vectors $\{u_{1},\dots,u_{k}\}$ and the corresponding matrix $U_{k}=(u_{1},\dots,u_{k})$ is 
	\begin{align*}
		\mu_{k} =  \lVert \mathbf{SUT}(I_{k}-U_{k}^{T}U_{k})\lVert, 
	\end{align*}
	while the orthogonality level of $\{v_{1},\dots,v_{k}\}$ and  $V_{k}=(v_{1},\dots,v_{k})$ is 
	\begin{align*}
		\nu_{k} =  \lVert \mathbf{SUT}(I_{k}-V_{k}^{T}V_{k})\lVert.
	\end{align*}
\end{Def}

By the Cauchy's interlacing theorem for singular values, it can be verified that $\mu_{i}\leq\mu_{i+1}$ and $\nu_{i}\leq\nu_{i+1}$. Let $\sigma_{i}(\cdot)$ denote the $i$-th largest singular value of a matrix. It is shown in \cite{Barlow2013} that 
\begin{equation}\label{2.16}
	\sigma_{1}(V_{k}) \leq 1 + \nu_{k}, 
\end{equation}
and
\begin{equation}\label{2.17}
	\sigma_{k}(V_{k}) \geq (1-2 \nu_{k})^{1/2} = 1 - \nu_{k} + O(\nu_{k}^{2})
\end{equation}
if $\nu_{k} < 1/2$.
Similarly, for $U_{k}$ we have
\begin{equation}\label{2.14}
	\sigma_{1}(U_{k}) \leq 1 + \mu_{k},
\end{equation}
and 
\begin{equation}\label{2.15}
	\sigma_{k}(U_{k}) \geq (1-2 \mu_{k})^{1/2} = 1 - \mu_{k} + O(\mu_{k}^{2})
\end{equation}
if $\mu_{k} < 1/2$. In the rest of the paper, we always assume $\mu_{i}<1/2$ and $\nu_{i}<1/2$.

Rewrite \eqref{2.3} as
$$Av_{k}=U_{k+1}\tilde{c}_{k}+f_{k},$$
where $\tilde{c}_{k}=(\xi_{1k},\dots,\xi_{k-1,k},\alpha_{k}+\xi_{kk},\beta_{k+1})^{T}$. Then using \eqref{2.15} we obtain
\begin{align}\label{2.20}
	\begin{split}
		\lVert \tilde{c}_{k} \lVert
		&= \lVert U_{k+1}^{\dag}(Av_{k}-f_{k}) \lVert
		\leq \sigma_{k+1}(U_{k+1})^{-1}(\lVert A \lVert + \lVert f_{k} \lVert) \\
		&\leq [1+\mu_{k+1}+O(\mu_{k+1}^{2})][\lVert A \lVert+O(\lVert A \lVert\mathbf{u})] \\
		&= \lVert A \lVert  + O(\lVert A \lVert(\mathbf{u}+\mu_{k+1})),
	\end{split}
\end{align}
where we neglect high order terms of $\mathbf{u}$ and $\mu_{k+1}$. Thus from \eqref{2.20} we have 
\begin{equation}\label{bnd_beta}
	\beta_{k+1}\leq\lVert A \lVert  + O(\lVert A \lVert(\mathbf{u}+\mu_{k+1})).
\end{equation}
Similarly, if we rewrite \eqref{2.4} as 
$$A^{T}u_{k}=V_{k}\tilde{d}_{k}+g_{k}$$
where $\tilde{d}_{k}=(\eta_{1k},\dots,\eta_{k-2,k},\beta_{k}+\eta_{k-1k},\alpha_{k})^T$, then we can obtain
\begin{equation}\label{2.21}
	\lVert \tilde{d}_{k} \lVert \leq 
	\lVert A \lVert  + O(\lVert A \lVert(\mathbf{u}+\nu_{k}))
\end{equation}
and thus 
\begin{equation}\label{bnd_alpha}
	\alpha_{k}\leq\lVert A \lVert + O(\lVert A \lVert(\mathbf{u}+\nu_{k})).
\end{equation}

The next result is about an upper bound on the coefficients of reorthogonalization step of $u_{k+1}$, which will be used later in the backward error analysis.
\begin{prop}\label{prop2.1}
	Let $\bar{c}_{k}=(\xi_{1k},\dots,\xi_{kk})^{T}$. Then 
	\begin{equation}\label{2.19}
		\lVert \bar{c}_{k} \lVert = O(\lVert A \lVert(\mathbf{u}+\mu_{k+1}+\nu_{k})).
	\end{equation}
\end{prop}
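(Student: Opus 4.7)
The plan is to isolate $\bar{c}_k$ by left-multiplying the $k$-th instance of the $u$-recurrence \eqref{2.3} by $U_k^T$, and then to estimate the right-hand side using the matrix form \eqref{2.6} of the $v$-recurrence together with the orthogonality-level bounds collected in Section \ref{sec2}. Writing \eqref{2.3} at $i=k$ as
\[
Av_k = \alpha_k u_k + \beta_{k+1}u_{k+1} + U_k\bar{c}_k + f_k,
\]
and multiplying by $U_k^T$ yields
\[
U_k^T U_k\,\bar{c}_k \;=\; U_k^T Av_k \;-\; \alpha_k U_k^T u_k \;-\; \beta_{k+1} U_k^T u_{k+1} \;-\; U_k^T f_k .
\]
From $\|\mathbf{SUT}(I_{k+1}-U_{k+1}^T U_{k+1})\|\le \mu_{k+1}$ (and symmetry) I read off $\|U_k^T u_k - e_k^{(k)}\| = O(\mu_{k+1})$, $\|U_k^T u_{k+1}\| = O(\mu_{k+1})$, and $\|U_k^T U_k - I_k\|=O(\mu_{k+1})$; together with the standing bounds $\alpha_k,\beta_{k+1}=O(\|A\|)$ supplied by \eqref{bnd_alpha}–\eqref{bnd_beta} and $\|f_k\|=O(\|A\|\mathbf{u})$, the displayed identity reduces the task to showing
\[
U_k^T A v_k = \alpha_k\, e_k^{(k)} + O\!\bigl(\|A\|(\mathbf{u}+\nu_k)\bigr).
\]

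To obtain this estimate I would restrict \eqref{2.6} to its first $k$ columns, so that the contribution $\alpha_{k+1}v_{k+1}(e_{k+1}^{(k+1)})^T$ drops out, giving $A^T U_k = V_k M + G_k$ with $M := (B_k^T + D_k)(:,1:k) \in\mathbb{R}^{k\times k}$ and $G_k$ the first $k$ columns of $G_{k+1}$. Transposing and applying to $v_k$, and using $V_k^T v_k = e_k^{(k)} + w$ with $\|w\|=O(\nu_k)$ (by the $v$-side orthogonality level), produces
\[
U_k^T A v_k = M^T e_k^{(k)} + M^T w + G_k^T v_k .
\]
The key algebraic observation is that the $k$-th row of $M$ is exactly $\alpha_k (e_k^{(k)})^T$: the only potentially nonzero entries in row $k$ of $B_k^T + D_k$ outside column $k$ live in column $k{+}1$ (namely $\beta_{k+1}$ and $\eta_{k,k+1}$), and the restriction to the first $k$ columns removes them. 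Hence $M^T e_k^{(k)} = \alpha_k e_k^{(k)}$, and the bound $\|M\|=O(\|A\|)$ coming from \eqref{2.21} (applied columnwise to $M$) furnishes the desired estimate.

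Substituting back into the formula for $U_k^T U_k \bar{c}_k$ yields $U_k^T U_k \bar{c}_k = O\!\bigl(\|A\|(\mathbf{u}+\mu_{k+1}+\nu_k)\bigr)$, and since $\sigma_k(U_k)^2 \ge 1-2\mu_k$ by \eqref{2.15} the matrix $U_k^T U_k$ is invertible with $\|(U_k^T U_k)^{-1}\| = 1+O(\mu_k)$, so inverting preserves the order of magnitude and gives \eqref{2.19}. The main thing to watch in writing this out carefully is the bookkeeping at the clean cancellation of $\alpha_k$: the $\alpha_k u_k$ term in the $u$-recurrence must be matched exactly against the $\alpha_k v_k$ contribution hidden in row $k$ of $M$, and this match relies crucially on the bidiagonal sparsity pattern of $B_k^T$ together with the strict-upper-triangular structure of $D_k$ after the truncation to the first $k$ columns; otherwise stray $\beta_{k+1}$ or $\eta_{k,k+1}$ terms would leak in and spoil the $O(\|A\|(\mathbf{u}+\mu_{k+1}+\nu_k))$ bound.
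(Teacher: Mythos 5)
Your argument mirrors the paper's in structure: multiply the $u$-recurrence \eqref{2.3} at step $k$ by $U_k^T$, substitute the $v$-recurrence \eqref{2.6} to expose $U_k^T A v_k$, extract the exact $\alpha_k e_k^{(k)}$ contribution, bound the remaining terms by the orthogonality levels, and invert $U_k^T U_k$. Your identification $M^T e_k^{(k)} = \alpha_k e_k^{(k)}$ from the sparsity pattern of $B_k^T + D_k$ after truncation to the first $k$ columns is correct, and is the same cancellation the paper achieves by splitting off the $\alpha_k v_k (e_k^{(k)})^T$ term in \eqref{2.6} explicitly.

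There is one gap: the claim that $\|M\| = O(\|A\|)$ follows ``from \eqref{2.21} applied columnwise.'' Estimate \eqref{2.21} bounds the 2-norm of a single column of $B_k^T + D_k$; applying it to each of the $k$ columns of $M$ controls $\max_j \|M(:,j)\|$, but converting column-norm bounds into a 2-norm bound costs a factor $\sqrt{k}$ (e.g.\ via the Frobenius norm), so this argument only yields $\|M\| = O(\sqrt{k}\,\|A\|)$. That would give $\|M^T w\| = O(\sqrt{k}\,\|A\|\,\nu_k)$ and hence $\|\bar{c}_k\| = O(\|A\|(\mathbf{u}+\mu_{k+1}+\sqrt{k}\,\nu_k))$, which is weaker than \eqref{2.19} under the paper's convention of displaying $k$-dependence explicitly (compare the bounds in Theorem~\ref{thm3.1}). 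The paper avoids this by bounding the matrix norm directly: from \eqref{2.6} it writes $V_{k-1}^T V_{k-1}(B_{k-1}^T + D_{k-1}) = V_{k-1}^T A^T U_k - \alpha_k V_{k-1}^T v_k (e_k^{(k)})^T - V_{k-1}^T G_k$ and then left-multiplies by $(V_{k-1}^T V_{k-1})^{-1}$, whose norm is $1 + O(\nu_{k-1})$ by \eqref{2.17}, obtaining $\|B_{k-1}^T + D_{k-1}\| = \|A\| + O(\|A\|(\mathbf{u}+\nu_k+\mu_k))$ with no $\sqrt{k}$. You can repair your version in the same spirit: from your own relation $A^T U_k = V_k M + G_k$ you have $M = V_k^{\dag}(A^T U_k - G_k)$, so $\|M\| \leq \sigma_k(V_k)^{-1}\bigl(\|A\|(1+\mu_k) + O(\|A\|\mathbf{u})\bigr) = \|A\|\bigl(1 + O(\mathbf{u}+\mu_k+\nu_k)\bigr)$ by \eqref{2.17} and \eqref{2.14} --- the same mechanism as \eqref{2.21}, but applied once at the matrix level rather than column by column.
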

\begin{proof}
	At the $k$-th step, recurrence \eqref{2.3} can be written as
	\begin{equation}\label{eq1}
		U_{k}\bar{c}_{k} = Av_{k}-\alpha_{k}u_{k}-\beta_{k+1}u_{k+1}-f_{k}.
	\end{equation}
	Therefore, we get
	\begin{align*}
		U_{k}^{T}U_{k}\bar{c}_{k}=
		U_{k}^{T}Av_{k}-\alpha_{k}U_{k}^{T}u_{k}-\beta_{k+1}U_{k}^{T}u_{k+1}-U_{k}^{T}f_{k},
	\end{align*}
	which can also be written as
	\begin{align}\label{2.18}
		U_{k}^{T}U_{k}\bar{c}_{k}=(B_{k-1}^{T}+D_{k-1})^{T}V_{k-1}^{T}v_{k}-\alpha_{k}(U_{k}^{T}u_{k}-e_{k}^{(k)})-\beta_{k+1}U_{k}^{T}u_{k+1}+G_{k}^{T}v_{k}-U_{k}^{T}f_{k},
	\end{align}
	where we use the relation
	$U_{k}^{T}Av_{k}
	=[V_{k-1}(B_{k-1}^{T}+D_{k-1})+\alpha_{k}v_{k}(e_{k}^{(k)})^{T}+G_{k}]^{T}v_{k}$ derived from \eqref{2.6}.
	
	Now we give an upper bound on $\|B_{k-1}^{T}+D_{k-1}\|$. Using \eqref{2.6} we have
	$$V_{k-1}^{T}V_{k-1}(B_{k-1}^{T}+D_{k-1})=V_{k-1}^{T}A^TU_{k}-\alpha_{k}V_{k-1}^{T}v_{k}(e_{k}^{(k)})^{T}-V_{k-1}^{T}G_{k},$$
	which leads to
	\begin{align}\label{bound1}
		\begin{split}
			\|B_{k-1}^{T}+D_{k-1}\| &\leq \|(V_{k-1}^{T}V_{k-1})^{-1}\|\cdot \|V_{k-1}^{T}A^TU_{k}-\alpha_{k}V_{k-1}^{T}v_{k}e_{k}^{T}-V_{k-1}^{T}G_{k}\| \\
			&\leq (1-2\nu_{k-1})^{-1} [\|A\|(1+\nu_{k-1})(1+\mu_{k})+\alpha_{k}\nu_{k}+(1+\nu_{k-1})\|G_{k}\|] \\
			&= [1+2\nu_{k-1}+O(\nu_{k-1}^{2})]\cdot[\|A\|(1+\nu_{k-1}+\mu_{k}+\nu_{k-1}\mu_{k})+ \\
			& \ \ \ \ \ (\lVert A \lVert + O(\lVert A \lVert(\mathbf{u}+\nu_{k}))\nu_{k}+(1+\nu_{k-1})O(\|A\|\mathbf{u})] \\
			&= \|A\|+O(\|A\|(\mathbf{u}+\nu_{k}+\mu_{k})),
		\end{split}
	\end{align}
	by neglecting high order terms, where we use the inequality
	$$\lVert(V_{k-1}^{T}V_{k-1})^{-1}\lVert=(\sigma_{k-1}(V_{k-1}))^{-2}\leq(1-2\nu_{k-1})^{-1} = 1+2\nu_{k-1}+O(\nu_{k-1}^{2})$$ 
	derived from \eqref{2.17}.
	
	By using upper bounds on $\alpha_k$ and $\beta_{k+1}$ in \eqref{bnd_alpha} and \eqref{bnd_beta}, we get
	\begin{align*}
		\begin{split}
			& \ \ \ \ \lVert -\alpha_{k}(U_{k}^{T}u_{k}-e_{k}^{(k)})-\beta_{k+1}U_{k}^{T}u_{k+1}+G_{k}^{T}v_{k}-U_{k}^{T}f_{k} \lVert \\
			&\leq \alpha_{k}\mu_{k} +\beta_{k+1}\mu_{k+1} + (1+1+\mu_{k})O(\lVert A \lVert\mathbf{u}) \\
			&= O(\lVert A\lVert(\mathbf{u}+\mu_{k+1})) .
		\end{split}
	\end{align*}
	Using the inequality derived from \eqref{2.15}
	$$\lVert(U_{k}^{T}U_{k})^{-1}\lVert = (\sigma_{k}(U_{k}))^{-2} \leq (1-2\mu_{k})^{-1} = 1+2\mu_{k}
	+ O(\mu_{k}^{2}),$$
	and by neglecting high order terms, we finally obtain from \eqref{2.18} that
	\begin{align*}
		\|\bar{c}_{k}\| &\leq \|(U_{k}^{T}U_{k})^{-1}\|\cdot[\|B_{k-1}^{T}+D_{k-1}\|\|V_{k-1}^{T}v_{k}\| +O(\lVert A\lVert(\mathbf{u}+\mu_{k+1}))] \\
		&\leq [1+2\mu_{k}
		+ O(\mu_{k}^{2})]\cdot[(\|A\|+O(\|A\|(\mathbf{u}+\nu_{k}+\mu_{k-1})))\nu_{k}+O(\lVert A\lVert(\mathbf{u}+\mu_{k+1}))] \\
		&= O(\lVert A \lVert(\mathbf{u}+\mu_{k+1}+\nu_{k})),
	\end{align*}
	which is the desired result.
\end{proof}

At the end of this section, we briefly investigate the orthogonality level between two contiguous Lanczos vectors. Is is shown from \cite{Paige1976} that the property of local orthogonality holds for Lanczos vectors of symmetric Lanczos process. In fact, this property also applies to the Lanczos bidiagonalization (without reorthogonalization; see \eqref{2.1}), which can be written in the following form:
\begin{equation}\label{local_orth}
	\beta_{i+1}^{'}|u_{i}^{T}u_{i+1}^{'}| = O(c_{1}(m,n)\|A\|\mathbf{u}),
\end{equation}
where $c_{1}(m,n)$ is a moderate constant depending on $m$ and $n$ \cite{Paige1976,Simon1984a}. The property of local orthogonality for $v_{i}$ is similar and we omit it. For this reason, in some literature it is proposed that $u_{i}$ is not needed in the reorthogonalization step of $u_{i+1}$; see e.g. \cite{Simon1984a,Simon1984b}. This case corresponds to choose $\xi_{ii}=0$ in \eqref{2.2}. In fact, the orthogonality among $u_{i}$ and $u_{i+1}$ will not be bad as long as the orthogonality between $u_{i+1}$ and $\{u_{1},\dots,u_{i-1}\}$ is in a desired level. If we use 
\begin{equation}
	\omega_{i+1}=\max_{1\leq j \leq i-1}|u_{j}^{T}u_{i+1}|	
\end{equation}
to measure the orthogonality level between $u_{i+1}$ and $\{u_{1},\dots,u_{i-1}\}$, then we have the following result.
\begin{prop}
	Suppose that $u_{i}$ is not used in the reorthogonalization step of $u_{i+1}$. If $\mu_{i}\ll i^{-1}$, Then we have
	\begin{equation}\label{bound2}
		\beta_{i+1}|u_{i}^{T}u_{i+1}| = O(c_{1}(m,n)\lVert A \lVert\mathbf{u}) + O(\lVert A \lVert(\nu_{i}\mu_{i}+\mu_{i}^{2}+\omega_{i+1}\mu_{i})).
	\end{equation}
\end{prop}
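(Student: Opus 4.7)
The plan is to project the reorthogonalization recurrence \eqref{2.2} (with $\xi_{ii}=0$) onto $u_{i}$, so that the left-hand side becomes exactly $\beta_{i+1}u_{i}^{T}u_{i+1}$, and then to estimate each of the three resulting terms. The inner product of $u_{i}$ with the pre-reorthogonalization vector $\beta_{i+1}^{'}u_{i+1}^{'}$ is controlled by the local orthogonality estimate \eqref{local_orth}, the rounding-error term $u_{i}^{T}f_{i}^{''}$ is of size $O(\|A\|\mathbf{u})$, and the main task is to bound the cross sum $\sum_{j=1}^{i-1}\xi_{ji}u_{i}^{T}u_{j}$.

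For the cross sum I would apply Cauchy--Schwarz to produce the factorization $\|U_{i-1}^{T}u_{i}\|\cdot\|\bar{c}_{i}\|$, where $\bar{c}_{i}$ is the coefficient vector from Proposition \ref{prop2.1} (now with $\xi_{ii}=0$). The first factor is at most $\mu_{i}$ directly from the definition of the orthogonality level, so the whole sum is of size $\mu_{i}\|\bar{c}_{i}\|$. The obstruction is that plugging in \eqref{2.19} verbatim introduces a factor $\mu_{i+1}$, and since $|u_{i}^{T}u_{i+1}|$ is itself one of the entries that contributes to $\mu_{i+1}$, the resulting estimate would be circular.

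To break the circularity I would rerun the proof of Proposition \ref{prop2.1} with one change: the step bounding $\|\beta_{i+1}U_{i}^{T}u_{i+1}\|$ via $\beta_{i+1}\mu_{i+1}$ is replaced by peeling off the zero row corresponding to $\xi_{ii}=0$ and bounding only $\|U_{i-1}^{T}u_{i+1}\|$. The entries of that vector have absolute value at most $\omega_{i+1}$ by definition, so its 2-norm is at most $\sqrt{i-1}\,\omega_{i+1}$; all remaining contributions are handled by \eqref{bnd_alpha}, \eqref{bnd_beta} and \eqref{bound1} exactly as in the original argument. This yields a replacement estimate of the form $\|\bar{c}_{i}\|=O(\|A\|(\mathbf{u}+\mu_{i}+\nu_{i}+\sqrt{i-1}\,\omega_{i+1}))$ in which $\mu_{i+1}$ no longer appears.

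Combining the three term bounds then produces the stated inequality, with the hypothesis $\mu_{i}\ll i^{-1}$ used to absorb the residual $\sqrt{i-1}$ factor multiplying $\mu_{i}\omega_{i+1}$ into the $O(\omega_{i+1}\mu_{i})$ term of the target. The main obstacle is the third paragraph: replacing $\mu_{i+1}$ by $\omega_{i+1}$ to avoid the circular dependence on $|u_{i}^{T}u_{i+1}|$ is the conceptual heart of the proof, and careful bookkeeping of the dimension factor is precisely where the assumption $\mu_{i}\ll i^{-1}$ earns its keep.
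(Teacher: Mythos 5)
Your approach is essentially the paper's: both project \eqref{2.2} (with $\xi_{ii}=0$) onto $u_{i}$, dispose of the first two terms via local orthogonality \eqref{local_orth} and the rounding-error bound, and bound the cross sum by rerunning the Proposition~\ref{prop2.1} argument restricted to the first $i-1$ coordinates so that $\omega_{i+1}$ replaces $\mu_{i+1}$ and the circularity in $|u_{i}^{T}u_{i+1}|$ is broken. The only difference is bookkeeping --- the paper tracks $M=\max_{1\le l\le i-1}|\xi_{li}|$ through the self-referential inequality $(1-i\mu_{i-1})M\le\cdots$ and then multiplies by $i\mu_{i}$, whereas you use Cauchy--Schwarz with $\|\bar{c}_{i}\|$ --- and your residual $\sqrt{i-1}$ on the $\omega_{i+1}\mu_{i}$ term is in fact milder than the factor $i$ the paper carries at the corresponding step.
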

\begin{proof}
	Since $\xi_{ii}=0$, from \eqref{2.2} we obtain
	$$\beta_{i+1}u_{i}^{T}u_{i+1}=\beta_{i+1}^{'}u_{i}^{T}u_{i+1}^{'}-
	\sum\limits_{j=1}^{i-1}\xi_{ji}u_{i}^{T}u_{j}-u_{i}^{T}f_{i}^{''}.$$
	Using \eqref{local_orth}, it follows that
	\begin{equation}\label{2.22}
		\beta_{i+1}|u_{i}^{T}u_{i+1}|\leq O(c_{1}(m,n)\|A\|\mathbf{u})+\max_{1\leq j \leq i-1}|\xi_{ji}|\cdot i\mu_{i}+O(\|A\|\mathbf{u}).	
	\end{equation}
	In order to get the desired result we need to find an upper bound on $M=\max_{1\leq j \leq i-1}|\xi_{ji}|$. Again from \eqref{2.2}, after some rearranging we obtain
	$$\xi_{lj}=\beta_{i+1}^{'}u_{l}^{T}u_{i+1}^{'}-\beta_{i+1}u_{l}^{T}u_{i+1}-\sum\limits_{j=1, j\neq l}^{i-1}(u_{l}^{T}u_{j})\xi_{ji}-u_{l}^{T}f_{i}^{''}$$
	for $l=1,\dots,i-1$. Premultiplying \eqref{2.1} by $U_{i}^{T}$, we have
	\begin{align*}
		\beta_{i+1}^{'}U_{i}^{T}u_{i+1}^{'}
		&= U_{i}^{T}Av_{i}-\alpha_{i}U_{i}^{T}u_{i}-U_{i}^{T}f_{i}^{'} \\
		&= (B_{i-1}+D_{i-1}^{T})V_{i-1}^{T}v_{i}+e_{i}^{(i)}\alpha_{i}v_{i}^{T}v_{i}+G_{i}^{T}v_{i}
		- \alpha_{i}U_{i}^{T}u_{i}-U_{i}^{T}f_{i}^{'} \\
		&= (B_{i-1}+D_{i-1}^{T})V_{i-1}^{T}v_{i} - \alpha_{i}(U_{i}^{T}u_{i}-e_{i}^{(i)}) +G_{i}^{T}v_{i}-U_{i}^{T}f_{i}^{'}.
	\end{align*}
	By using \eqref{bound1}, after some calculations, we have
	$$\beta_{i+1}^{'}|u_{l}^{T}u_{i+1}^{'}| \leq \beta_{i+1}^{'}\|U_{i}^{T}u_{i+1}^{'}\|=
	O(\|A\|(\mathbf{u}+\mu_{i}+\nu_{i})),$$
	and thus
	\begin{align*}
		|\xi_{li}| \leq O(\|A\|(\mathbf{u}+\mu_{i}+\nu_{i}))+\beta_{i+1}\max_{1\leq j \leq i-1}|u_{j}^{T}u_{i+1}|+i\mu_{i-1}M+O(\|A\|\mathbf{u}).
	\end{align*}
	Now the right-hand side does not depend on $l$ anymore, and we obtain by taking the maximum on the left side that
	\begin{align*}
		(1-i\mu_{i-1})M &\leq O(\|A\|(\mathbf{u}+\mu_{i}+\nu_{i}))+\beta_{i+1}\max_{1\leq j \leq i-1}|u_{j}^{T}u_{i+1}|+O(\|A\|\mathbf{u}) \\
		&= O(\|A\|(\mathbf{u}+\mu_{i}+\nu_{i}+\omega_{i+1}))
	\end{align*}
	Since $\mu_{i-1}\ll i^{-1}$ we obtain
	$$M=O(\|A\|(\mathbf{u}+\mu_{i}+\nu_{i}+\omega_{i+1})).$$
	By \eqref{2.22} we finally obtain the desired bound.
\end{proof}

For semiorthogonalization strategy and partial reorthogonalization \cite{Simon1984a,Simon1984b,Larsen1998}, the orthogonality levels of $U_{i}$ and $V_{i}$ are kept below $O(\sqrt{\mathbf{u}})$, thus at the $k$-th step we have $\nu_{k},\mu_{k}=O(\sqrt{\mathbf{u}})$. It follows from \eqref{bound2} that $\beta_{i+1}|u_{i}^{T}u_{i+1}| = O(c_{1}(m,n)\lVert A \lVert\mathbf{u})$ as long as we keep $\omega_{k+1}=O(\sqrt{\mathbf{u}})$. Thus the property of local orthogonality still holds for the semiorthogonalization strategy.

\section{Backward error analysis of the LBRO}\label{sec3}
In this section, we first establish a relationship between the LBRO and Householder transformation based bidiagonal reduction. Then we give a backward error analysis of the LBRO to show the mixed forward-backward stability property.

\subsection{Connections with Householder bidiagonal reduction}

Before giving the results in finite precision arithmetic, we first show a connection between the Lanczos bidiagonalization and Householder QR factorization in exact arithmetic. Is is shown in \cite{Bjorck1992} that the modified Gram-Schmidt(MGS) procedure for the QR factorization of a matrix $C\in \mathbb{R}^{r\times l}$ with $r\geq l$ can be interpreted as the Householder QR factorization applied to the augmented matrix $\bar{C}=\begin{pmatrix}
	O_{l\times l} \\ C
\end{pmatrix}$, which is not only true in exact arithmetic, but also in the presence of rounding errors
as well. To see this equivalence, let $q_{1},\dots,q_{l}\in \mathbb{R}^{l}$ be vectors obtained by applying the MGS procedure to $C$, and the corresponding compact QR factorization of $C$ is $C=QR$ where $Q=(q_{1},\dots,q_{l})$. Then the Householder QR factorization of $\bar{C}$ is
\begin{equation} \label{3.10}
	(W_{l}\cdots W_{1})\bar{C} =
	\begin{pmatrix}
		R \\ O_{r\times l}
	\end{pmatrix} ,
\end{equation}
where $W_{j}$ are Householder matrices:
\begin{equation*}
	W_{j}=I_{r+l}-w_{j}w_{j}^{T} , \ \
	w_{j} = \begin{pmatrix}
		-e_{j}^{(l)} \\
		q_{j}
	\end{pmatrix} \in \mathbb{R}^{r+l} .
\end{equation*}
Thus the $k$-th step Householder transformation $W_{k}\cdots W_{1}\bar{C}$ is identical to applying $k$ steps MGS procedure to $C$ \cite{Bjorck1992}.

For the LBRO a similar property holds. We prove the result for $m>n$, while we discuss the case of $m=n$ in the remark. In order to avoid notation confusions, for all quantities computed in exact arithmetic, we add `` $\hat{}$ " to $\alpha_{i}$, $u_{i}$, $B_{k}$, etc. to denote the corresponding quantities. 
\begin{prop}\label{prop3.1}
	For the $k$-step Lanczos bidiagonalization in exact arithmetic, let 
	\begin{equation*}
		\hat{P}_{i}=I_{m+n+1}-\hat{p}_{i}\hat{p}_{i}^{T}, \ \
		\hat{p}_{i}=\begin{pmatrix}
			-e_{i}^{(n+1)} \\
			\hat{u}_{i}
		\end{pmatrix} \in \mathbb{R}^{m+n+1}.
	\end{equation*}
	Then we have	
	\begin{equation} \label{3.13}
		\begin{pmatrix}
			O_{(n+1)\times k} \\
			A\hat{V}_{k}
		\end{pmatrix} =
		\hat{P}_{1} \cdots \hat{P}_{k+1}\begin{pmatrix}
			\hat{B}_{k} \\
			O_{s\times k}
		\end{pmatrix}, \ \ s=m+n-k.
	\end{equation}
\end{prop}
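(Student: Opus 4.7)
The plan is to reduce the claim to computing the action of the product $\hat{P}_{k+1}\cdots\hat{P}_1$ (not $\hat{P}_1\cdots\hat{P}_{k+1}$) on each column of the augmented matrix, exploiting two facts: each $\hat{P}_i$ is an orthogonal involution, and the action on block vectors of the form $\begin{pmatrix} 0 \\ \hat{u}_j \end{pmatrix}$ or $\begin{pmatrix} e_j^{(n+1)} \\ 0 \end{pmatrix}$ is extremely simple because $\|\hat{u}_j\|=1$ and the $\hat{u}_j$'s are mutually orthogonal in exact arithmetic.

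First I would observe that $\|\hat{p}_i\|^2 = 1+\|\hat{u}_i\|^2 = 2$, hence $\hat{P}_i = I_{m+n+1}-\hat{p}_i\hat{p}_i^T$ satisfies $\hat{P}_i^T=\hat{P}_i$ and $\hat{P}_i^2=I_{m+n+1}$. Consequently $(\hat{P}_1\cdots\hat{P}_{k+1})^{-1} = \hat{P}_{k+1}\cdots\hat{P}_1$, and \eqref{3.13} is equivalent to
\begin{equation*}
\hat{P}_{k+1}\cdots\hat{P}_1 \begin{pmatrix} O_{(n+1)\times k} \\ A\hat{V}_k \end{pmatrix} = \begin{pmatrix} \hat{B}_k \\ O_{s\times k} \end{pmatrix}.
\end{equation*}
This reformulation is the convenient one, because now the Householder reflectors act in the natural order on the columns to be reduced.

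Next I would establish the key computational lemma: for every $1\le j\le k+1$,
\begin{equation*}
\hat{P}_{k+1}\cdots \hat{P}_1 \begin{pmatrix} 0 \\ \hat{u}_j \end{pmatrix} = \begin{pmatrix} e_j^{(n+1)} \\ 0 \end{pmatrix}.
\end{equation*}
To prove it I would verify three elementary identities by direct substitution. \emph{(i)} For $i<j$, since $\hat{u}_i^T\hat{u}_j=0$ in exact arithmetic and $(e_i^{(n+1)})^T\cdot 0 = 0$, we have $\hat{p}_i^T(0;\hat{u}_j)^T = 0$, so $\hat{P}_i$ fixes $(0;\hat{u}_j)^T$. \emph{(ii)} Because $\hat{p}_j^T(0;\hat{u}_j)^T = \hat{u}_j^T\hat{u}_j = 1$, one finds
\begin{equation*}
\hat{P}_j\begin{pmatrix} 0\\ \hat{u}_j\end{pmatrix} = \begin{pmatrix} 0\\ \hat{u}_j\end{pmatrix} - \hat{p}_j = \begin{pmatrix} e_j^{(n+1)} \\ 0 \end{pmatrix}.
\end{equation*}
\emph{(iii)} For $i>j$, $\hat{p}_i^T(e_j^{(n+1)};0)^T = -(e_i^{(n+1)})^T e_j^{(n+1)} = 0$, so each subsequent $\hat{P}_i$ with $i>j$ fixes the vector $(e_j^{(n+1)};0)^T$. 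Composing these three facts yields the displayed identity.

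Finally I would conclude by linearity. The recurrence $A\hat{v}_j = \hat{\alpha}_j\hat{u}_j + \hat{\beta}_{j+1}\hat{u}_{j+1}$ (valid for $1\le j\le k$) gives
\begin{equation*}
\hat{P}_{k+1}\cdots \hat{P}_1 \begin{pmatrix} 0 \\ A\hat{v}_j \end{pmatrix} = \hat{\alpha}_j\begin{pmatrix} e_j^{(n+1)} \\ 0 \end{pmatrix} + \hat{\beta}_{j+1}\begin{pmatrix} e_{j+1}^{(n+1)} \\ 0 \end{pmatrix},
\end{equation*}
and the top block is precisely the $j$-th column of $\hat{B}_k$ (padded in $\mathbb{R}^{n+1}$, which is legitimate as long as $k+1\le n+1$). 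Collecting the $k$ columns gives the desired identity, and multiplying by $\hat{P}_1\cdots\hat{P}_{k+1}$ on the left restores the original form \eqref{3.13}.

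I don't anticipate a serious obstacle here; the proof is really a bookkeeping argument once the involution/orthogonality observations are in place. The one point needing mild care is the dimensional bookkeeping (the embedding of $e_j^{(k+1)}$ into $e_j^{(n+1)}$ requires $k\le n$, and the $m=n$ endpoint case deserves the brief comment already anticipated in Remark \ref{remark2.1}), but no new inequalities or estimates are required because we are working in exact arithmetic and can use exact orthogonality of $\hat{u}_1,\dots,\hat{u}_{k+1}$ throughout.
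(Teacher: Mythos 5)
Your proof is correct, and it takes a genuinely more elementary route than the paper. The paper proves Proposition~\ref{prop3.1} by first running the Lanczos bidiagonalization to completion ($n$ steps, inserting a new starting vector if the process terminates early), recognizing $(b,\;A\hat V_n)=\hat U_{n+1}(\hat\beta_1 e_1^{(n+1)},\;\hat B_n)$ as a compact QR factorization, invoking the Bj\"orck MGS--Householder equivalence \eqref{3.10} on the augmented matrix, and then equating columns $2$ through $k+1$. You instead do the bookkeeping directly: each $\hat P_i$ is an orthogonal involution, the product $\hat P_{k+1}\cdots\hat P_1$ sends $(0;\hat u_j)^T$ to $(e_j^{(n+1)};0)^T$ because of exact orthonormality of $\hat u_1,\dots,\hat u_{k+1}$, and linearity plus the three-term recurrence $A\hat v_j=\hat\alpha_j\hat u_j+\hat\beta_{j+1}\hat u_{j+1}$ finishes it. What your approach buys is self-containedness and locality: you never need to extend the process beyond step $k+1$ nor to fabricate a continuation when lucky termination occurs, and you make no appeal to an external theorem. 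What the paper's approach buys is narrative continuity: by routing the exact-arithmetic result through the Bj\"orck framework, it sets up the exact analogue of Lemma~\ref{lem3.1} and Theorem~\ref{thm4.1}, which are genuinely needed (and cannot be done by your hand-computation) once rounding errors destroy the orthogonality that your argument exploits. One very small nit: the $m=n$ adjustment you flag is discussed in the (unnumbered) remark immediately following Proposition~\ref{prop3.1}, not Remark~\ref{remark2.1}, but the mathematical point you raise is the right one.
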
	
\begin{proof}
	After performing the procedure in exact arithmetic $n$ steps (if the procedure terminates at some step, we can choose a new starting vector and continue on), we have
	\begin{equation*}
		\begin{pmatrix}
			b, & A\hat{V}_{n}
		\end{pmatrix} = \hat{U}_{n+1}
		\begin{pmatrix}
			\hat{\beta}_{1}e_{1}^{(n+1)}, & \hat{B}_{n}
		\end{pmatrix} = \hat{U}_{n+1}
		\begin{pmatrix}
			\hat{\beta}_{1} & \hat{\alpha}_{1} &  & \\
			& \hat{\beta}_{2} & \ddots &  \\
			&  & \ddots & \hat{\alpha}_{n} \\
			&  &  & \hat{\beta}_{n+1}
		\end{pmatrix},
	\end{equation*}
	which is the QR factorization of $(b, \ AV_{n})\in\mathbb{R}^{m\times(n+1)}$. Therefore, by \eqref{3.10} we have
	\begin{equation*}
		\begin{pmatrix}
			0_{n+1} &	O_{(n+1)\times n} \\
			b & A\hat{V}_{n}
		\end{pmatrix} =
		\hat{P}_{1} \cdots \hat{P}_{n+1}\begin{pmatrix}
			\hat{\beta}_{1}e_{1}^{(n+1)} & \hat{B}_{n} \\
			0_{m} & O_{m\times n}
		\end{pmatrix}.
	\end{equation*}
	Equating from the second to $(k+1)$-th column of the above matrix yields
	\begin{equation*}
		\begin{pmatrix}
			O_{(n+1)\times k} \\
			AV_{k}
		\end{pmatrix} =
		(\hat{P}_{1} \cdots \hat{P}_{k+1}) (\hat{P}_{k+2} \cdots \hat{P}_{n+1})\begin{pmatrix}
			B_{k} \\
			O_{s\times k}
		\end{pmatrix} =
		\hat{P}_{1} \cdots \hat{P}_{k+1}\begin{pmatrix}
			B_{k} \\
			O_{s\times k}
		\end{pmatrix} ,
	\end{equation*}
	which the desired result.
\end{proof}

\begin{remark}
	If $m=n$, we have $(b, \ AV_{n})\in\mathbb{R}^{n\times(n+1)}$, and the property \eqref{3.10} can not be directly used. In fact, the procedure must terminate at step $n$, and thus $\beta_{n+1}=0$ and $u_{n+1}=0$. In this case, the form of \eqref{3.13} should be rewritten after some adjustments. Let $\hat{B}_{n}$ be the $n\times n$ lower bidiagonal form by discarding $\hat{\beta}_{n+1}$ and let $\hat{p}_{i}=\begin{pmatrix}
		-e_{i}^{(n)} \\
		\hat{u}_{i}
	\end{pmatrix} \in \mathbb{R}^{m+n}$ for $i=1,\dots,n$ and $P_{n+1}=I_{m+n}$. Then we can verify that 
	\begin{equation}\label{3.8}
		\begin{pmatrix}
			O_{n\times k} \\
			A\hat{V}_{k}
		\end{pmatrix} =
		\hat{P}_{1} \cdots \hat{P}_{k+1}\begin{pmatrix}
			\hat{B}_{k} \\
			O_{s\times k}
		\end{pmatrix} , 
	\end{equation}
	where $s=m+n-k-1$ for $k=1,\dots,n-1$ and $s=m$ for $k=n$.
\end{remark}


Now we give a corresponding version of Proposition \ref{prop3.1} in finite precision arithmetic. Similar to the above, we prove the result for $m>n$ and discuss the case of $m=n$ in the remark. The following lemma is needed, which is a generalization of \cite[Lemma 4.4]{Barlow2013} for one-sided reorthogonalizaton (for upper Lanczos bidiagonalization).
\begin{Lem}\label{lem3.1}
	For the $k$-th step LBRO, define the orthogonal matrix $\widehat{P}_{k+1}$ that is a product of Householder transformations as
	\begin{equation}\label{3.1}
		\widehat{P}_{k+1}=P_{1}\cdots P_{k+1} , \ \
		P_{i}=I_{m+n+1}-p_{i}p_{i}^{T}  , \ \
		p_{i}=\begin{pmatrix}
			-e_{i}^{(n+1)} \\
			u_{i}
		\end{pmatrix} \in \mathbb{R}^{m+n+1} ,
	\end{equation}
	then  we have
	\begin{equation}\label{3.4}
		\widehat{P}_{k+1}\begin{pmatrix}
			\alpha_{k}e_{k}^{(k)} \\
			\beta_{k+1}e_{1}^{(s+1)}
		\end{pmatrix} =
		\begin{pmatrix}
			0_{n+1} \\
			Av_{k}
		\end{pmatrix} +  x_{k}, \ \ s=m+n-k ,
	\end{equation}
	where $x_{k}\in \mathbb{R}^{m+n+1}$ and
	\begin{equation}\label{bnd_x}
		\lVert x_{k} \lVert = O(\lVert A \lVert (k\mathbf{u}+ k\nu_{k}+ \mu_{k+1})).
	\end{equation}
\end{Lem}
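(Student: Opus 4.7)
The plan is to compute $\widehat{P}_{k+1}y_k = P_1\cdots P_{k+1}y_k$ by applying one Householder reflector at a time, from right to left, tracking how the $(n+1,m)$-block structure of each intermediate vector evolves. Viewing $y_k$ as a vector in $\mathbb{R}^{m+n+1}$ whose only nonzero entries are $\alpha_k$ at position $k$ and $\beta_{k+1}$ at position $k+1$, and setting $z^{(j)} := P_j\cdots P_{k+1}y_k$ (with $z^{(k+2)}:=y_k$), split as $z^{(j)}=(a_j;b_j)$ with $a_j\in\mathbb{R}^{n+1}$ and $b_j\in\mathbb{R}^m$, the Householder recursion together with $p_j=(-e_j^{(n+1)};u_j)$ reduces to
\[ a_j = a_{j+1}+\tau_j e_j^{(n+1)},\quad b_j = b_{j+1}-\tau_j u_j,\quad \tau_j := -(a_{j+1})_j + u_j^T b_{j+1}. \]

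The two base steps are direct. Applying $P_{k+1}$ gives $\tau_{k+1}=-\beta_{k+1}$, so $a_{k+1}=\alpha_k e_k^{(n+1)}$ and $b_{k+1}=\beta_{k+1}u_{k+1}$. Applying $P_k$ gives $\tau_k=-\alpha_k+\beta_{k+1}u_k^T u_{k+1}$, so the bottom block becomes $\alpha_k u_k+\beta_{k+1}u_{k+1}-\beta_{k+1}(u_k^Tu_{k+1})u_k$. Substituting the LBRO recurrence \eqref{2.3} in the form $\alpha_ku_k+\beta_{k+1}u_{k+1}=Av_k-U_k\bar{c}_k-f_k$ turns this into $b_k = Av_k+r_k$ with $r_k = -U_k\bar{c}_k - f_k - \beta_{k+1}(u_k^T u_{k+1})u_k$. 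Proposition \ref{prop2.1} controls $\|U_k\bar{c}_k\|$, \eqref{bnd_beta} controls $\beta_{k+1}$, and $|u_k^Tu_{k+1}|\leq\mu_{k+1}$, so $\|r_k\|=O(\|A\|(\mathbf{u}+\nu_k+\mu_{k+1}))$; the top block $a_k=\beta_{k+1}(u_k^Tu_{k+1})e_k^{(n+1)}$ has norm $O(\|A\|\mu_{k+1})$.

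For $j=k-1,\dots,1$, a direct induction shows that $a_{j+1}$ is supported only in positions $j+1,\dots,k$, hence $(a_{j+1})_j=0$ and the recursion collapses to $\tau_j=u_j^Tb_{j+1}$, $b_j=(I-u_ju_j^T)b_{j+1}$. Telescoping then yields
\[ b_1-Av_k = r_k - \sum_{j=1}^{k-1}(u_j^Tb_{j+1})u_j,\qquad a_1 = \beta_{k+1}(u_k^Tu_{k+1})e_k^{(n+1)}+\sum_{j=1}^{k-1}(u_j^Tb_{j+1})e_j^{(n+1)}. \]
To bound the sum, I would write $u_j^Tb_{j+1}=u_j^TAv_k+\epsilon_j$ with $\epsilon_j:=u_j^T(b_{j+1}-Av_k)$. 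Collecting the $\epsilon_j$ into a vector and using $b_{j+1}-Av_k=r_k-\sum_{i=j+1}^{k-1}(u_i^Tb_{i+1})u_i$ leads to a small linear system $(I+T)\epsilon = U_{k-1}^Tr_k - TU_{k-1}^TAv_k$, where $T$ is the strictly upper triangular part of $U_{k-1}^TU_{k-1}$ with $\|T\|\leq\mu_{k+1}<1/2$. The ingredient $\|U_{k-1}^TAv_k\|=O(\|A\|\nu_k)$ follows from \eqref{2.6} and the estimate on $\|B_{k-1}^T+D_{k-1}\|$ obtained inside the proof of Proposition \ref{prop2.1}, essentially because the rows of $(B_k^T+D_k)_{[:,1:k-1]}$ kill the last coordinate of $V_k^Tv_k$, leaving only the components bounded by $\nu_k$. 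Solving the linear system and bounding $\|\sum_j \epsilon_ju_j\|\leq(1+\mu_{k-1})\|\epsilon\|$ and $\|\sum_j(u_j^TAv_k)u_j\|=\|U_{k-1}U_{k-1}^TAv_k\|$ then gives the asserted estimate for both $a_1$ and $b_1-Av_k$.

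The main technical obstacle is this last estimate: naive term-by-term bounding inflates the $\mu_{k+1}$ contribution by $k$ or $\sqrt k$, and the trick is to treat the $Av_k$-contribution collectively via the single matrix $U_{k-1}U_{k-1}^T$ and to reduce the self-referential $\epsilon_j$'s to the tiny linear system $(I+T)\epsilon=\cdots$ whose coefficient matrix is $I$ plus a strictly upper triangular perturbation of norm $\leq\mu_{k+1}$. The $k$-factors attached to $\mathbf{u}$ and $\nu_k$ in the stated bound are generous: they arise if one instead bounds each of the $k$ reorthogonalization coefficients and rounding residuals separately and sums them, which is a simpler but weaker route that still gives $\|x_k\|=O(\|A\|(k\mathbf{u}+k\nu_k+\mu_{k+1}))$ and is amply sufficient for the backward error analysis in the remainder of Section \ref{sec3}.
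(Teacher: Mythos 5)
Your proposal is correct in its main line, and it differs from the paper's argument in a meaningful way at the bounding stage. Your set-up (split into $(n+1,m)$ blocks, track $z^{(j)}=P_j\cdots P_{k+1}y_k$, note that $(a_{j+1})_j=0$ for $j<k$, hence $\tau_j=u_j^Tb_{j+1}$, $b_j=(I-u_ju_j^T)b_{j+1}$, telescope to $b_1-Av_k=r_k-\sum_{j<k}(u_j^Tb_{j+1})u_j$) is the same machine the paper runs and is sound. The divergence is in how the remaining sum is controlled. The paper does \emph{not} track the accumulated $b_j$'s: after isolating $\bar w_k$ from $P_kP_{k+1}$, it applies each earlier $P_i$ directly to the clean vector $(0_{n+1};Av_k)$, so each local error is $\bar w_i=-(u_i^TAv_k)p_i$ with $\|\bar w_i\|=O(\|A\|(\mathbf{u}+\nu_k))$ and \emph{no} $\mu$ contribution; it then writes $x_k=\sum_i(P_1\cdots P_{i-1})\bar w_i$ and uses $\|P_i\|=1$. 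A one-line triangle inequality then yields $O(\|A\|(k\mathbf{u}+k\nu_k+\mu_{k+1}))$ with no linear system. You, instead, unroll the genuine product, which makes the coefficients $u_j^Tb_{j+1}$ self-referential; you then resolve this with the system $(I+T)\epsilon=U_{k-1}^Tr_k-TU_{k-1}^TAv_k$, $\|T\|\leq\mu_{k+1}<1/2$, and the collective estimate $\|U_{k-1}^TAv_k\|=O(\|A\|\nu_k)$ (your identification that the $[:,1:k-1]$ block of $B_k^T+D_k$ has zero $k$-th row, killing the $v_k^Tv_k=1$ coordinate, is correct). Carried through, your route actually proves the sharper bound $\|x_k\|=O(\|A\|(\mathbf{u}+\nu_k+\mu_{k+1}))$ with no $k$-factors at all, which of course implies the stated one. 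So each route buys something: the paper's is shorter and avoids inverting anything; yours is more work but removes the $k$-dependence entirely.

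One small inaccuracy in your closing remarks: in \emph{your} framework the ``simpler'' term-by-term route does not recover the stated bound. Because $b_{j+1}-Av_k$ already contains $r_k=O(\|A\|(\mathbf{u}+\nu_k+\mu_{k+1}))$, bounding $|\epsilon_j|$ separately and summing over $j=1,\dots,k-1$ inflates $\mu_{k+1}$ as well, giving $O(\|A\|k(\mathbf{u}+\nu_k+\mu_{k+1}))$ or worse from the cross terms $\sum_{i>j}|u_j^Tu_i|$; it is not a route to $O(\|A\|(k\mathbf{u}+k\nu_k+\mu_{k+1}))$. The route that does give exactly the stated $k$-pattern --- $k$ on $\mathbf{u}$ and $\nu_k$ but not on $\mu_{k+1}$ --- is the paper's clean-vector trick, which keeps $\mu_{k+1}$ confined to the single term $\bar w_k$ by never mixing it into what the earlier reflectors see. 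This does not affect the validity of your main argument (the linear-system route), but the discussion in the final paragraph conflates two different simplifications.
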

\begin{proof}
	By the definition of $P_{i}$ and together with \eqref{eq1} we have
	\begin{align*}
		P_{k+1}\begin{pmatrix}
			\alpha_{k}e_{k}^{(k)} \\
			\beta_{k+1}e_{1}^{(s+1)}
		\end{pmatrix}
		&= \begin{pmatrix}
			\alpha_{k}e_{k}^{(k)} \\
			\beta_{k+1}e_{1}^{(s+1)}
		\end{pmatrix} - p_{k+1}^{T}\begin{pmatrix}
			\alpha_{k}e_{k}^{(k)} \\
			\beta_{k+1}e_{1}^{(s+1)}
		\end{pmatrix}p_{k+1} \\
		&=\begin{pmatrix}
			\alpha_{k}e_{k}^{(k)} \\
			\beta_{k+1}e_{1}^{(s+1)}
		\end{pmatrix} + \beta_{k+1}\begin{pmatrix}
			-e_{k+1}^{(n+1)} \\
			u_{k+1}
		\end{pmatrix} \\
		&= \begin{pmatrix}
			\alpha_{k}e_{k}^{(n+1)} \\
			\beta_{k+1}u_{k+1}
		\end{pmatrix} \\
		&= \begin{pmatrix}
			\alpha_{k}e_{k}^{(n+1)} \\
			Av_{k}-\alpha_{k}u_{k}
		\end{pmatrix} -
		\begin{pmatrix}
			0_{n+1} \\
			U_{k}\bar{c}_{k}
		\end{pmatrix} -
		\begin{pmatrix}
			0_{n+1} \\
			f_{k}
		\end{pmatrix} .
	\end{align*}
	We also have
	\begin{align*}
		P_{k}\begin{pmatrix}
			\alpha_{k}e_{k}^{(n+1)} \\
			Av_{k}-\alpha_{k}u_{k}
		\end{pmatrix}
		&= P_{k}\begin{pmatrix}
			0_{n+1} \\
			Av_{k}
		\end{pmatrix}-\alpha_{k}P_{k}
		\begin{pmatrix}
			-e_{k}^{(n+1)} \\
			u_{k}
		\end{pmatrix} \\
		&= \begin{pmatrix}
			0_{n+1} \\
			Av_{k}
		\end{pmatrix} - (u_{k}^{T}Av_{k})p_{k} + \alpha_{k}p_{k} \\
		&=  \begin{pmatrix}
			0_{n+1} \\
			Av_{k}
		\end{pmatrix} - u_{k}^{T}(\alpha_{k}u_{k}+\beta_{k+1}u_{k+1}+U_{k}\bar{c}_{k}+f_{k})p_{k}
		+ \alpha_{k}p_{k} \\
		&=  \begin{pmatrix}
			0_{n+1} \\
			Av_{k}
		\end{pmatrix} - (\beta_{k+1}u_{k}^{T}u_{k+1}+u_{k}^{T}U_{k}\bar{c}_{k}+(u_{k}^{T}f_{k})p_{k} ,
	\end{align*}
	and
	\begin{equation*}
		P_{k}\begin{pmatrix}
			0_{n+1} \\
			U_{k}\bar{c}_{k}
		\end{pmatrix} =
		\begin{pmatrix}
			0_{n+1} \\
			U_{k}\bar{c}_{k}
		\end{pmatrix} - u_{k}^{T}U_{k}\bar{c}_{k}p_{k} , \ \
		P_{k}\begin{pmatrix}
			0_{n+1} \\
			f_{k}
		\end{pmatrix} =
		\begin{pmatrix}
			0_{n+1} \\
			f_{k}
		\end{pmatrix} - (u_{k}^{T}f_{k})p_{k} .
	\end{equation*}
	Therefore, we obtain
	\begin{equation}\label{3.5}
		P_{k}P_{k+1}\begin{pmatrix}
			\alpha_{k}e_{k}^{(k)} \\
			\beta_{k+1}e_{1}^{(s+1)}
		\end{pmatrix} =
		\begin{pmatrix}
			0_{n+1} \\
			Av_{k}
		\end{pmatrix} -\beta_{k+1}u_{k}^{T}u_{k+1}p_{k} -
		\begin{pmatrix}
			0_{n+1} \\
			U_{k}\bar{c}_{k}
		\end{pmatrix} -
		\begin{pmatrix}
			0_{n+1} \\
			f_{k}
		\end{pmatrix} .
	\end{equation}
	Let $\bar{w}_{k}=-\beta_{k+1}u_{k}^{T}u_{k+1}p_{k} -
	\begin{pmatrix}
		0_{n+1} \\
		U_{k}\bar{c}_{k}
	\end{pmatrix} -
	\begin{pmatrix}
		0_{n+1} \\
		f_{k}
	\end{pmatrix}$. Notice that $\|p_{k}\|=\sqrt{2}$. Using Proposition \ref{prop2.1} and the upper bound on $\beta_{k+1}$ in \eqref{bnd_beta}, we have
	$$\lVert \bar{w}_{k}\lVert=O(\lVert A \lVert(\mathbf{u}+\mu_{k+1}+\nu_{k})).$$
	For $i=1,\dots,k-1$, we have
	\begin{equation*}
		P_{i}\begin{pmatrix}
			0_{n+1} \\
			Av_{k}
		\end{pmatrix} =
		\begin{pmatrix}
			0_{n+1} \\
			Av_{k}
		\end{pmatrix} - (u_{i}^{T}Av_{k})p_{i} .
	\end{equation*}
	Using \eqref{2.21} we get
	\begin{align*}
		\lVert (u_{i}^{T}Av_{k})p_{i}\lVert
		&\leq \sqrt{2} \lVert v_{k}^{T}(V_i\tilde{d}_{i}+g_{i}) \lVert
		= \sqrt{2}\lVert (v_{k}^{T}V_i)\tilde{d}_{i} +v_k^{T}g_{i} \lVert \\
		&\leq \sqrt{2}[\nu_{k}(\lVert A \lVert  + O(\lVert A \lVert(\mathbf{u}+\nu_{i}))) + O(\lVert A \lVert\mathbf{u})] \\
		&= O(\lVert A \lVert(\mathbf{u}+\nu_{k}))
	\end{align*}
	or be written as 
	\begin{equation}\label{3.7}
		P_{i}\begin{pmatrix}
			0_{n+1} \\
			Av_{k}
		\end{pmatrix} =
		\begin{pmatrix}
			0_{n+1} \\
			Av_{k}
		\end{pmatrix} + \bar{w}_{i} , \ \ i=1,\dots,k-1,
	\end{equation}
	with $\bar{w}_{i}=- (u_{i}^{T}Av_{k})p_{i}$ and $\|\bar{w}_{i}\| = O(\lVert A \lVert(\mathbf{u}+\nu_{k}))$.
	
	Therefore, by \eqref{3.5} and \eqref{3.7}, we obtain
	\begin{align*}
		\widehat{P}_{k+1}\begin{pmatrix}
			\alpha_{k}e_{k}^{(k)} \\
			\beta_{k+1}e_{1}^{(s+1)}
		\end{pmatrix}
		= P_{1}\cdots P_{k-1}\Bigg( \begin{pmatrix}
			0_{n+1} \\
			Av_{k}
		\end{pmatrix} + \bar{w}_{k}   \Bigg) 
		= \begin{pmatrix}
			0_{n+1} \\
			Av_{k}
		\end{pmatrix} + x_{k} ,
	\end{align*}
	where $x_{k} = \sum_{i=1}^{k}(P_{1}\cdots P_{i-1})\bar{w}_{i}$. Notice that $P_i$ are Householder matrices and thus $\|P_i\|=1$. We finally obtain
	\begin{equation*}
		\lVert x_{k} \lVert
		\leq \sum_{i=1}^{k} \lVert \bar{w}_{i} \lVert
		= O(\lVert A \lVert (k\mathbf{u}+ k\nu_{k}+ \mu_{k+1})),
	\end{equation*}
	which is the desired result.
\end{proof}

\begin{remark}\label{remark3.2}
	For the one-sided reorthogonalization we have $\bar{c}_{k}=0$, which does not hold for a general reorthogonalization strategy. This makes the proof of \cite[Lemma 4.4]{Barlow2013} can not be applied to Lemma \ref{lem3.1}. The upper bound on $\bar{c}_{k}$ plays a key role in our proof. 
\end{remark}

The following theorem is a naturally corollary of Lemma \ref{lem3.1}, which generalizes \cite[Theorem 4.1]{Barlow2013}. The proof is similar and we omit it.
\begin{theorem}\label{thm3.1}
	For the $k$-step LBRO, we have
	\begin{equation}\label{3.2}
		\begin{pmatrix}
			O_{(n+1)\times k} \\
			AV_{k}
		\end{pmatrix} + X_{k} =
		\widehat{P}_{k+1}\begin{pmatrix}
			B_{k} \\
			O_{s\times k}
		\end{pmatrix} , \ \ s=m+n-k ,
	\end{equation}
	where $X_{k}=(x_{1}, \dots, x_{k})$ and
	\begin{equation}\label{3.3}
		\lVert X_{k} \lVert =
		O(\lVert A \lVert \sqrt{k}(k\mathbf{u}+k\nu_{k}+\mu_{k+1})) .
	\end{equation}
\end{theorem}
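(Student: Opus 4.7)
The plan is to derive Theorem \ref{thm3.1} as the column-wise assembly of Lemma \ref{lem3.1}. The key insight is that the single-column identity proved in Lemma \ref{lem3.1} at step $k$ has an analogue at every intermediate step $j\le k$, and that the later Householder factors $P_{j+2},\dots,P_{k+1}$ act trivially on the $j$-th column of $\binom{B_k}{O_{s\times k}}$, so each column of the matrix identity reduces to a fresh instance of the lemma.

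First, I would observe that the lower bidiagonal structure of $B_k$ makes the $j$-th column of $\binom{B_k}{O_{s\times k}}$ equal to $\alpha_j e_j^{(m+n+1)}+\beta_{j+1} e_{j+1}^{(m+n+1)}$, with both nonzero entries sitting in the top block of size $n+1$ since $j+1\le k+1\le n+1$. For any $i>j+1$ the Householder vector $p_i=\binom{-e_i^{(n+1)}}{u_i}$ is supported on coordinate $i$ of the top block and on the entire bottom block, so
\[
p_i^T\bigl(\alpha_j e_j^{(m+n+1)}+\beta_{j+1} e_{j+1}^{(m+n+1)}\bigr)=-\alpha_j\delta_{i,j}-\beta_{j+1}\delta_{i,j+1}=0,
\]
which means $P_i$ fixes this vector. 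Therefore $\widehat{P}_{k+1}(\alpha_j e_j+\beta_{j+1} e_{j+1})=\widehat{P}_{j+1}(\alpha_j e_j+\beta_{j+1} e_{j+1})$.

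Next, because the Lanczos vectors $u_1,\dots,u_{j+1}$, $v_1,\dots,v_j$ and the factors $P_1,\dots,P_{j+1}$ depend only on the first $j$ steps of the LBRO, Lemma \ref{lem3.1} applies verbatim with $k$ replaced by $j$, yielding
\[
\widehat{P}_{j+1}\bigl(\alpha_j e_j^{(m+n+1)}+\beta_{j+1} e_{j+1}^{(m+n+1)}\bigr)=\binom{0_{n+1}}{Av_j}+x_j,\qquad \|x_j\|=O(\|A\|(j\mathbf{u}+j\nu_j+\mu_{j+1})).
\]
Since the orthogonality levels $\mu_i,\nu_i$ are monotone in the step index and $j\le k$, this specialises to the uniform bound $\|x_j\|=O(\|A\|(k\mathbf{u}+k\nu_k+\mu_{k+1}))$ for every $j\in\{1,\dots,k\}$.

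Finally, collecting the $k$ column identities produces the matrix identity \eqref{3.2} with $X_k=(x_1,\dots,x_k)$, and the norm bound follows from $\|X_k\|\le\|X_k\|_F\le\sqrt{k}\,\max_j\|x_j\|$, giving \eqref{3.3}. I do not anticipate a real obstacle: the entire argument rests on the elementary support-based commutation $P_i(\alpha_j e_j+\beta_{j+1} e_{j+1})=\alpha_j e_j+\beta_{j+1} e_{j+1}$ for $i>j+1$, which is a bookkeeping fact but is precisely what allows the single-column Lemma \ref{lem3.1} to aggregate cleanly into the matrix statement of Theorem \ref{thm3.1}.
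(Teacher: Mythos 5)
Your proposal is correct and is precisely the argument the paper has in mind when it states that Theorem \ref{thm3.1} is a ``natural corollary'' of Lemma \ref{lem3.1} and omits the proof: the $j$-th column of $\bigl(\begin{smallmatrix}B_k\\ O_{s\times k}\end{smallmatrix}\bigr)$ is $\alpha_j e_j^{(m+n+1)}+\beta_{j+1}e_{j+1}^{(m+n+1)}$, the factors $P_i$ for $i>j+1$ annihilate it in the sense $p_i^T(\cdot)=0$ and hence fix it, so $\widehat{P}_{k+1}$ collapses to $\widehat{P}_{j+1}$ on that column and Lemma \ref{lem3.1} at step $j$ applies verbatim. Your passage from $\max_j\|x_j\|$ to $\|X_k\|$ via the Frobenius norm and the monotonicity $\mu_j\le\mu_{k+1}$, $\nu_j\le\nu_k$ is exactly what produces the $\sqrt{k}$ factor in \eqref{3.3}.
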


\begin{remark}\label{remark3.1}
	For $m=n$, let $B_{n}$ be the $n\times n$ lower bidiagonal form by discarding $\beta_{n+1}$. Then let $p_{i}=\begin{pmatrix}
		-e_{i}^{(n)} \\
		u_{i}
	\end{pmatrix} \in \mathbb{R}^{m+n}$ for $i=1,\dots,n$ and $P_{n+1}=I_{m+n}$. Now for $k<n$ \eqref{3.4} should be rewritten in the following form:
	\begin{equation*}
		\widehat{P}_{k+1}\begin{pmatrix}
			\alpha_{k}e_{k}^{(k)} \\
			\beta_{k+1}e_{1}^{(s+1)}
		\end{pmatrix} =
		\begin{pmatrix}
			0_{n} \\
			Av_{k}
		\end{pmatrix} +  x_{k} , \ \ s=m+n-k-1.
	\end{equation*}
	Especially, for $k=n$ \eqref{3.4} should be rewritten as
	\begin{equation*}
		\widehat{P}_{n+1}\begin{pmatrix}
			\alpha_{k}e_{n}^{(n)} \\
			0_m
		\end{pmatrix} =
		\begin{pmatrix}
			0_{n} \\
			Av_{n}
		\end{pmatrix} +  x_{n},
	\end{equation*}
	and the upper bound on $\|x_{n}\|$ should be
	\begin{equation}\label{bn_xn}
		\lVert x_{n} \lVert =
		O(\lVert A \lVert (n\mathbf{u}+n\nu_{n}+\mu_{n})).
	\end{equation}
	The result of Theorem \ref{thm3.1} can also be rewritten similarly. In order to obtain \eqref{bn_xn}, we first notice that $\beta_{n+1}u_{n+1}=Av_{n}-\alpha_{n}u_{n}-f_{n}$ since we do not need to reorthogonalize $u_{n+1}$. By letting $u_{n+1}=U_{n}l$ where $l\in\mathbb{R}^{n}$ and using methods similar to the proof of Proposition \ref{prop2.1}, we can get $\beta_{n+1} =  O(\lVert A \lVert(\mathbf{u}+\mu_{n}+\nu_{n}))$. Then \eqref{bn_xn} can be obtained with the help of the upper bound on $\beta_{n+1}$.
\end{remark}

Notice that Theorem \ref{thm3.1} is a corresponding version of Proposition \ref{prop3.1} in finite precision arithmetic. It establish a relationship between the $k$-step LBRO and Householder transformation based bidiagonal reduction of an augmented matrix of $AV_k$ with a perturbation.

\subsection{Mixed forward-backward error bound of the $k$-step LBRO}
There is a deep connection between the orthogonality level of $U_{k+1}$ and the detailed structure of $\widehat{P}_{k+1}$. In order to reveal it, we first state the following theorem which combines the results of \cite[Theorem 4.1]{Bjorck1992} and \cite[Theorem2.1 and Corralary 5.1]{Paige2009}.

\begin{theorem}[\cite{Bjorck1992,Paige2009}]\label{thm4.1}
	For any arbitrary integer $r \geq l \geq 1$, let $Q_{l}=(q_{1}, \dots, q_{l})\in \mathbb{R}^{r\times l}$ where $\lVert q_{j}\lVert=1$, $j=1,\dots,l$. Define
	\begin{equation*}
		W_{j}=I_{r+l}-w_{j}w_{j}^{T} , \ \
		w_{j} = \begin{pmatrix}
			-e_{j}^{(l)} \\
			q_{j}
		\end{pmatrix} \in \mathbb{R}^{r+l} ,
	\end{equation*}
	\begin{equation*}
		S_{l} = (I_{l}+M_{l})^{-1}M_{l} , \ \
		M_{l}=\mathbf{SUT}(Q_{l}^{T}Q_{l}) .
	\end{equation*}
	Then we have
	\begin{equation*}\label{4.1}
		W_{1}\cdots W_{l}=
		\begin{blockarray}{ccc}
			l & r & \\
			\begin{block}{(c|c)c}
				S_{l}& (I_{l}-S_{l})Q_{l}^{T} & l \\
				\BAhhline{|-|-|}
				Q_{l}(I_{l}-S_{l})& I_{n}-Q_{l}(I_{l}-S_{l})Q_{l}^{T} & r \\
			\end{block}
		\end{blockarray} ,
	\end{equation*}
	and
	\begin{equation}\label{4.2}
		\|S_{l}\| \leq 1, \ \ \
		\dfrac{\|M_{l}\|}{1+\|M_{l}\|}\leq\lVert S_{l} \lVert \leq 2\lVert M_{l}\lVert.
	\end{equation}
\end{theorem}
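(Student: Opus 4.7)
The plan is to reduce the entire theorem to one identification: the compact WY representation of the product $W_1\cdots W_l$. First I would observe that each $W_j = I_{r+l}-w_jw_j^T$ is a genuine Householder reflection, since $w_j^Tw_j = 1+\|q_j\|^2 = 2$ makes the implicit Householder scalar equal to one. The standard recursion for a product of Householder matrices then yields
$$
W_1\cdots W_l \;=\; I_{r+l} - \mathcal{W}\, T\, \mathcal{W}^T, \qquad \mathcal{W} = (w_1,\dots,w_l) = \begin{pmatrix} -I_l \\ Q_l \end{pmatrix},
$$
with $T$ upper triangular, unit diagonal, and generated by $T_j = \bigl(\begin{smallmatrix} T_{j-1} & -T_{j-1}\mathcal{W}_{j-1}^T w_j \\ 0 & 1 \end{smallmatrix}\bigr)$. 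Applying the block-triangular inversion formula to this recursion shows that $T^{-1}$ has unit diagonal and $(i,j)$-entry equal to $w_i^Tw_j$ for $i<j$, so $T^{-1}$ equals $I_l$ plus the strictly upper triangular part of $\mathcal{W}^T\mathcal{W}=I_l+Q_l^TQ_l$, which is exactly $I_l+M_l$. Hence $T=(I_l+M_l)^{-1}$ and $I_l-T=(I_l+M_l)^{-1}M_l=S_l$.

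With this identification in hand, the block formula is pure arithmetic. Expanding $\mathcal{W}(I_l-S_l)\mathcal{W}^T$ with $\mathcal{W}=\bigl(\begin{smallmatrix}-I_l\\ Q_l\end{smallmatrix}\bigr)$ produces the four blocks $I_l-S_l$, $-(I_l-S_l)Q_l^T$, $-Q_l(I_l-S_l)$ and $Q_l(I_l-S_l)Q_l^T$, and subtracting these from the corresponding blocks of $I_{r+l}$ reproduces the $2\times 2$ expression in the statement. A direct check at $l=1$ (where $M_1=0$, $S_1=0$, and the formula collapses to the block form of the single reflection $W_1$) serves as a sanity check on signs.

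For the estimates in \eqref{4.2}, I would argue as follows. The bound $\|S_l\|\le 1$ is immediate from the block decomposition, since $S_l$ is the top-left block of the orthogonal matrix $W_1\cdots W_l$ and any submatrix of an orthogonal matrix has spectral norm at most one. For the two-sided inequality, I would rewrite $S_l=(I_l+M_l)^{-1}M_l$ as $S_l+M_lS_l=M_l$. Taking norms gives $\|M_l\|\le(1+\|M_l\|)\|S_l\|$, which is the lower bound. Rearranging the same identity as $S_l=M_l-M_lS_l$ and taking norms gives $\|S_l\|\le\|M_l\|/(1-\|M_l\|)$, and the upper bound $\|S_l\|\le 2\|M_l\|$ then follows in the regime $\|M_l\|\le 1/2$, which the paper implicitly enforces through its standing assumptions $\mu_k,\nu_k<1/2$.

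The main obstacle is the identification $T^{-1}=I_l+M_l$ in the first step. The compact WY recursion for $T_j$ is classical, but pinning down its inverse exactly --- rather than just up to an upper triangular perturbation --- requires a careful induction using the block-triangular inversion formula so that the off-diagonal entries of $T^{-1}$ come out to $w_i^Tw_j$. Once this identification is in place every other claim in the theorem reduces to a direct block multiplication or a one-line manipulation of $(I_l+M_l)S_l=M_l$.
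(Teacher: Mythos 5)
This theorem is cited by the paper (from Bj\"orck--Paige 1992 and Paige 2009) rather than proved there, so there is no in-paper proof to compare against; I will assess your argument on its own terms.

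Your derivation via the compact WY representation is correct and is essentially the natural route to this result. The key steps all check out: $w_j^Tw_j=2$ so each $W_j$ is a true reflector with Householder scalar $1$; the recursion $T_j = \bigl(\begin{smallmatrix} T_{j-1} & -T_{j-1}\mathcal{W}_{j-1}^Tw_j \\ 0 & 1 \end{smallmatrix}\bigr)$ is the standard WY update; inverting block-triangularly gives $(T^{-1})_{ij}=w_i^Tw_j$ for $i<j$ with unit diagonal, and since $w_i^Tw_j=e_i^Te_j+q_i^Tq_j=q_i^Tq_j$ for $i<j$ this is exactly $I_l+M_l$, so $T=(I_l+M_l)^{-1}$ and $I_l-T=S_l$; the block expansion of $I_{r+l}-\mathcal{W}(I_l-S_l)\mathcal{W}^T$ reproduces the stated partition (the ``$I_n$'' in the paper's display is a typo for $I_r$); $\|S_l\|\le 1$ follows because $S_l$ is a submatrix of an orthogonal matrix; and the two-sided inequality falls out of $(I_l+M_l)S_l=M_l$.

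One small point to tighten. The theorem states $\|S_l\|\le 2\|M_l\|$ unconditionally, but your argument through $\|S_l\|\le\|M_l\|/(1-\|M_l\|)$ only delivers the constant $2$ when $\|M_l\|\le 1/2$, and you lean on the paper's standing assumption to excuse the gap. The cleaner fix is to observe that the bound is in fact unconditional: if $\|M_l\|\ge 1/2$, then $2\|M_l\|\ge 1\ge\|S_l\|$ by the first inequality you already established, and if $\|M_l\|<1/2$ your Neumann-type estimate applies. Combining the two cases gives $\|S_l\|\le 2\|M_l\|$ for all $M_l$, with no appeal to the paper's hypotheses on $\mu_k,\nu_k$. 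With that patch your proof is complete and self-contained.
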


Notice that $\lVert M_{l}\lVert=\|\mathbf{SUT}(I_{l}-Q_{l}^{T}Q_{l}) \|$, which is just the orthogonality level of $Q_{l}$, the quantity $\|S_{l}\|\in[0,1]$ is an another beautiful measure of the orthogonality level of $Q_{l}$. The following result reveals a connection between the orthogonality level of $U_{k+1}$ and structure of $\widehat{P}_{k+1}$. For simplicity, we only prove the result for $m>n$, for $m=n$ the result can be proved similarly after some adjustments; see Remark \ref{remark3.1} for related discussions.

\begin{Lem}\label{lem4.1}
	For the $k$-step LBRO, there exist vectors $\tilde{u}_{k+2},\dots,\tilde{u}_{n+1}$ and $\tilde{v}_{k+2},\dots,\tilde{v}_{n}$ with $\|\tilde{u}_{i}\|=\|\tilde{v}_{i}\|=1$ and nonnegative numbers $\tilde{\alpha}_{k+2},\dots,\tilde{\alpha}_{n}$ and  $\tilde{\beta}_{k+2},\dots,\tilde{\beta}_{n+1}$, such that $\tilde{u}_{i}$ is orthogonal to $U_{k+1}$ and $\tilde{u}_{j}$, and $\tilde{v}_{i}$ is orthogonal to $V_{k+1}$ and $\tilde{v}_{j}$ for $i\neq j, i, j>k+1$, respectively, and for matrices  $\widetilde{U}=(U_{k+1},\tilde{u}_{k+2},\dots,\tilde{u}_{n+1})$, $\widetilde{V}=(V_{k+1},\tilde{v}_{k+2},\dots,\tilde{v}_{n})$ and
	$$\widetilde{B}=
	\begin{pmatrix}
		\alpha_{1} & & & & & & \\
		\beta_{2} & \ddots & & & & & \\
		& \ddots & \alpha_{k} & & & & \\
		& & \beta_{k+1} & \alpha_{k+1} & & & \\
		& & & \tilde{\beta}_{k+2} & \tilde{\alpha}_{k+2} & & \\
		& & & & \tilde{\beta}_{k+3} &  \ddots &  \\
		& & & & & \ddots & \tilde{\alpha}_{n} \\
		& & & & & & \tilde{\beta}_{n+1}
	\end{pmatrix} \in\mathbb{R}^{(n+1)\times n},$$
	the following properties hold. \\
	(1). There exist a matrix $\widetilde{X}\in\mathbb{R}^{(m+n+1)\times n}$ such that
	\begin{equation}\label{3.18}
		\begin{pmatrix}
			O_{(n+1)\times n} \\
			A\widetilde{V}
		\end{pmatrix} + \widetilde{X} =
		\widetilde{P}\begin{pmatrix}
			\widetilde{B} \\
			O_{m\times n}
		\end{pmatrix}
	\end{equation}
	and 
	\begin{equation}
		\lVert \widetilde{X}\lVert =
		O(\lVert A \lVert \sqrt{n}(k\mathbf{u}+k\nu_{k+1}+\mu_{k+1})),
	\end{equation}
	where 
	\begin{equation*}\label{3.20}
		\widetilde{P}= \widehat{P}_{k+1}\widetilde{P}_{k+2}\cdots\widetilde{P}_{n+1}, \ \ \ 
		\widetilde{P}_{i}=I_{m+n+1}-\tilde{p}_{i}\tilde{p}_{i}^{T}, \ \ \
		\tilde{p}_{i}=\begin{pmatrix}
			-e_{i}^{(n+1)} \\
			\tilde{u}_{i}
		\end{pmatrix}. 
	\end{equation*} \\
	(2). If $\widetilde{P}$ is partitioned as
	\begin{equation*}
		\widetilde{P} =
		\begin{blockarray}{ccc}
			n+1 & m & \\
			\begin{block}{(cc)c}
				\widetilde{P}_{11} & \widetilde{P}_{12} & n+1 \\
				\widetilde{P}_{21} & \widetilde{P}_{22} & m \\
			\end{block}
		\end{blockarray} ,
	\end{equation*}
	then we have
	\begin{equation}\label{3.19}
		\widetilde{P}_{21} = \widetilde{U}(I_{n+1}-\widetilde{P}_{11}), \ \ \ 
		\lVert \widetilde{P}_{11} \lVert \leq 2\mu_{k+1}.
	\end{equation}	 
\end{Lem}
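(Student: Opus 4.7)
The plan is to extend the $k$-step LBRO output $(U_{k+1}, V_{k+1}, B_k, \alpha_{k+1}, v_{k+1})$ to a full $n$-step bidiagonalization by continuing the Lanczos-type recurrence in exact arithmetic, then deduce part (2) from Theorem \ref{thm4.1} and part (1) by combining this construction with Theorem \ref{thm3.1}. For the construction, for $i \geq k+2$ I would inductively define
\[
\tilde{\beta}_i \tilde{u}_i = A\tilde{v}_{i-1} - \tilde{\alpha}_{i-1}\tilde{u}_{i-1} - U_{k+1}\tilde{c}_{i-1}, \qquad \tilde{\alpha}_i \tilde{v}_i = A^T\tilde{u}_i - \tilde{\beta}_i \tilde{v}_{i-1} - V_{k+1}\tilde{d}_i,
\]
with the starting step using $v_{k+1}, u_{k+1}, \alpha_{k+1}$ in place of $\tilde{v}_{i-1}, \tilde{u}_{i-1}, \tilde{\alpha}_{i-1}$, and choosing $\tilde{c}_{i-1}, \tilde{d}_i$ so that $\tilde{u}_i \perp U_{k+1}$ and $\tilde{v}_i \perp V_{k+1}$ exactly. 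A short induction, mimicking the ``local orthogonality implies global orthogonality'' argument of exact Lanczos, shows that each $\tilde{u}_i, \tilde{v}_i$ is automatically orthogonal to all earlier extension vectors (the key identity being $\tilde{u}_j^T A\tilde{v}_i = \tilde{\alpha}_i \delta_{ij} + \tilde{\beta}_{j+1} \delta_{i,j+1}$ among the extension vectors), so the claimed orthogonality properties hold.

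For part (2), I apply Theorem \ref{thm4.1} with $Q_l = \widetilde{U}$, $r = m$, $l = n+1$. The product $W_1\cdots W_{n+1}$ there coincides with $\widetilde{P} = \widehat{P}_{k+1}\widetilde{P}_{k+2}\cdots\widetilde{P}_{n+1}$, since the $p_i$ for $i \leq k+1$ and $\tilde{p}_i$ for $i \geq k+2$ are precisely the Householder vectors built from the columns of $\widetilde{U}$. The block formula immediately yields $\widetilde{P}_{21} = \widetilde{U}(I_{n+1} - \widetilde{S}) = \widetilde{U}(I_{n+1} - \widetilde{P}_{11})$. Because $\tilde{u}_{k+2},\dots,\tilde{u}_{n+1}$ are exactly orthonormal and exactly orthogonal to $U_{k+1}$, the matrix $\mathbf{SUT}(\widetilde{U}^T\widetilde{U})$ has nonzero entries only in its top-left $(k+1)\times(k+1)$ block, which equals $\mathbf{SUT}(U_{k+1}^T U_{k+1})$. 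Hence $\|\widetilde{M}\| = \mu_{k+1}$, and the bound $\|\widetilde{P}_{11}\| \leq 2\mu_{k+1}$ follows from \eqref{4.2}.

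For part (1), the first $k$ columns of \eqref{3.18} reduce to \eqref{3.2}: for $j \leq k$ and $i \geq k+2$, the bidiagonal entry $(\widetilde{B})_{ij}$ is zero, so $\tilde{p}_i^T(\widetilde{B}e_j, 0)^T = 0$, meaning $\widetilde{P}_{k+2},\dots,\widetilde{P}_{n+1}$ act as the identity on these columns and only $\widehat{P}_{k+1}$ contributes the perturbation $X_k$. For column $j = k+1$, an analogue of Lemma \ref{lem3.1} applied to the exact recurrence $\tilde{\beta}_{k+2}\tilde{u}_{k+2} = Av_{k+1} - \alpha_{k+1}u_{k+1} - U_{k+1}\tilde{c}_{k+1}$ produces a perturbation of order $O(\|A\|(k\mathbf{u} + k\nu_{k+1} + \mu_{k+1}))$; bounding $\|\tilde{c}_{k+1}\|$ here uses \eqref{2.6} together with $\|V_k^T v_{k+1}\| \leq \nu_{k+1}$ and $\|U_{k+1}^T u_{k+1} - e_{k+1}^{(k+1)}\| \leq \mu_{k+1}$. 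For columns $j \geq k+2$, the exact orthogonality $V_{k+1}^T \tilde{v}_j = 0$ combined with \eqref{2.6} collapses $U_{k+1}^T A\tilde{v}_j$ to $G_{k+1}^T\tilde{v}_j$, so the corresponding coefficient vectors are of order $O(\|A\|\mathbf{u})$ only. Assembling these column-wise bounds for the bottom block of $\widetilde{X}$ and controlling the top block by $\|\widetilde{P}_{11}\widetilde{B}\| \leq 2\mu_{k+1}\|\widetilde{B}\| = O(\|A\|\mu_{k+1})$ produces the stated bound.

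The main obstacle will be preserving the strictly bidiagonal form of $\widetilde{B}$ while keeping the discrepancy between $A\widetilde{V}$ and $\widetilde{U}\widetilde{B}$ within the claimed bound. Because orthogonalization against $U_{k+1}$ (which is not exactly orthonormal) introduces a spurious term $U_{k+1}\tilde{c}_i$ rather than yielding a clean three-term Lanczos relation, that term must be absorbed into $\widetilde{X}$ and proved small via \eqref{2.6}. In addition, the accumulated contribution of $n - k$ extension steps must not blow up; the decisive ingredient that prevents this is that $\widetilde{P}$ is a product of unit-norm Householder reflectors, so column-wise contributions add rather than compound, giving the final $\sqrt{n}$ factor.
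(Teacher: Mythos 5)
Your proposal follows essentially the same path as the paper: construct the exact-arithmetic continuation of the LBRO reorthogonalized against $U_{k+1}$ and $V_{k+1}$ (the paper also projects out $\tilde{u}_{k+2},\dots$ explicitly, but as you note those projections are vacuous once orthogonality propagates), bound $\widetilde{X}$ column by column via the Lemma~\ref{lem3.1}/Theorem~\ref{thm3.1} mechanism with the $(k+1)$-st column treated separately through \eqref{2.6}, and deduce part (2) from Theorem~\ref{thm4.1} together with the observation that $\mathbf{SUT}(\widetilde{U}^T\widetilde{U})$ is $\mathbf{SUT}(U_{k+1}^T U_{k+1})$ padded with zeros. (One tiny slip: the key identity should read $\tilde{u}_j^T A\tilde{v}_i = \tilde{\alpha}_i\delta_{ij} + \tilde{\beta}_{i+1}\delta_{j,i+1}$, with the off-diagonal entry at $j=i+1$, consistent with the lower-bidiagonal structure of $\widetilde{B}$.)
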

\begin{proof}
	(1). We use the following procedure to construct vectors $\tilde{u}_{k+2},\dots,\tilde{u}_{n+1}$ and $\tilde{v}_{k+2},\dots,\tilde{v}_{n}$. After $k$ steps of the LBRO, we have computed $B_{k}$, $\alpha_{k+1}$, $U_{k+1}$ and $V_{k+1}$. At step $i\geq k+1$, vector $\tilde{u}_{i+1}$ is generated as
	$$r_i=A\tilde{v}_{i}-\alpha_{i}\tilde{u}_{i}, \ \ \ 
	\tilde{\beta}_{i+1}\tilde{u}_{i+1} =r_{i}-\sum\limits_{j=1}^{k+1}(u_{j}^{T}r_i)u_j-\sum\limits_{j=k+2}^{i}(\tilde{u}_{j}^{T}r_i)\tilde{u}_j$$
	such that $\|\tilde{u}_{i+1}\|=1$, where for $i=k+1$ we let $\tilde{u}_{k+1}=u_{k+1}$ and $\tilde{v}_{k+1}=v_{k+1}$. This procedure is equivalent to the one step Lanczos bidiagonalization of $\tilde{u}_{i+1}$ with full reorthogonalization. The construction of $\tilde{v}_{i+1}$ is similar, which is identical to the one step Lanczos bidiagonalization of $\tilde{v}_{i+1}$ with full reorthogonalization. If the procedure terminates at some step, it can be continued by choosing a new starting vector. Note that the above procedure can be treated as that we first perform the $k$-step LBRO in finite precision arithmetic and then for $i\geq k+1$ we perform the Lanczos bidiagonalization with full reorthogonalization in exact arithmetic. 
	
	By the above construction, we know that $\|\tilde{u}_{i}\|=\|\tilde{v}_{i}\|=1$ and $\tilde{u}_{i}$ is orthogonal to $U_{k+1}$ and $\tilde{u}_{j}$ while $\tilde{v}_{i}$ is orthogonal to $V_{k+1}$ and $\tilde{v}_{j}$ for $i\neq j, i, j>k+1$, respectively. Using this property, for $i\geq k+1$, by checking the proof of Lemma \ref{lem3.1} we can find that $\bar{w}_i=O(\lVert A \lVert (\mathbf{u}+\nu_{k+1}+ \mu_{k+1}))$, and $\bar{w}_j=0$ for $j=1,\dots,i-1$ except for $\bar{w}_j=O(\lVert A \lVert(\mathbf{u}+\nu_{k+1}))$ if $i=k+1$. 
	Therefore \eqref{3.20} becomes
	\[\widehat{P}_{k+1}\widetilde{P}_{k+2}\cdots\widetilde{P}_{i+1}\begin{pmatrix}
			\tilde{\alpha}_{i}e_{i}^{(i)} \\
			\tilde{\beta}_{i+1}e_{1}^{(s+1)}
		\end{pmatrix} =
		\begin{pmatrix}
			0_{n+1} \\
			A\tilde{v}_{i}
		\end{pmatrix} +  \tilde{x}_{i}, \ \ \ s=m+n-i \]
	with $\lVert \tilde{x}_{i} \lVert = O(\lVert A \lVert (k\mathbf{u}+ k\nu_{k+1}+ \mu_{k+1}))$ for $i=k+1$ or $\lVert \tilde{x}_{i} \lVert = O(\lVert A \lVert (\mathbf{u}+ \nu_{k+1}+ \mu_{k+1}))$ for $i>k+1$. Note that for $i=k+1$ we let $\tilde{v}_{k+1}=v_{k+1}$ and $\tilde{\alpha}_{k+1}=\alpha_{k+1}$. Therefore, equality \eqref{3.2} in Theorem \ref{thm3.1} becomes \eqref{3.18}, where
	$\widetilde{X}=(x_{1},\dots,x_{k},\tilde{x}_{k+1},\dots,\tilde{x}_{n})$ and thus
	\[\|\widetilde{X}\|\leq\sqrt{n}\max_{1\leq i \leq k \atop k+1\leq j\leq n}\{\|x_{i}\|,\|\tilde{x}_{j}\|\}
	=O(\lVert A \lVert \sqrt{n}(k\mathbf{u}+k\nu_{k+1}+\mu_{k+1})).\]
	~\\
	(2). Let $M=\mathbf{SUT}(\widetilde{U}^{T}\widetilde{U})$. By Theorem \ref{thm4.1}, we have
	\begin{equation*}
		\widetilde{P}_{11} = (I_{n+1}+M)^{-1}M, \ \
		\widetilde{P}_{21} = \widetilde{U}(I_{n+1}-\widetilde{P}_{11}) .
	\end{equation*}
	By inequality \eqref{4.2} we obtain 
	$$\lVert \widetilde{P}_{11} \lVert \leq 2\mu_{k+1},$$
	since the orthogonality level of $\widetilde{U}$ is $\|M\|=\mu_{k+1}$.
\end{proof}

Now we are ready to give a mixed forward-backward error bound of the $k$-step LBRO. We only prove the result for $m>n$, for the case of $m=n$ the result can be  proved similarly after some adjustments.
\begin{theorem}\label{thm4.2}
	For the $k$-step LBRO, there exist two orthornormal matrices $\bar{U}_{k+1}=(\bar{u}_{1},\dots,\bar{u}_{k+1})\in\mathbb{R}^{m\times(k+1)}$ and $\bar{V}_{k+1}=(\bar{v}_{1},\dots,\bar{v}_{k+1})\in \mathbb{R}^{n\times k}$ such that
	\begin{align}
		& \bar{U}_{k+1}(\beta_{1}e_{1}^{(k+1)}) = b+\delta_b , \label{4.14} \\
		& (A+E)\bar{V}_{k} = \bar{U}_{k+1}B_{k} , \label{4.15}  \\
		& (A+E)^{T}\bar{U}_{k+1} = \bar{V}_{k}B_{k}^{T}+\alpha_{k+1}\bar{v}_{k+1}(e_{k+1}^{(k+1)})^{T}, \label{4.16}
	\end{align}
	and 
	\begin{equation}\label{4.6}
		\lVert \bar{U}_{k+1}-U_{k+1} \lVert \leq 2\mu_{k+1} + O(\mu_{k+1}^{2}) , \ \ \ 
		\lVert \bar{V}_{k+1}-V_{k+1} \lVert \leq \nu_{k+1} +O(\nu_{k+1}^{2}) ,
	\end{equation}
	where $E$ and $\delta_b$ are perturbation matrix and vector, respectively, satisfying
	\begin{equation}\label{4.7}
		\lVert E \lVert =
		O(\lVert A \lVert \sqrt{n}(k\mathbf{u}+k\nu_{k+1}+\mu_{k+1})),
		\ \ \ \lVert \delta_b \lVert = O(\lVert b \lVert \mathbf{u}) .
	\end{equation}
\end{theorem}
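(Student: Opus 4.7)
The plan is to derive the backward error relations by orthonormalizing the ``almost orthonormal'' matrices produced in Lemma \ref{lem4.1}. That lemma provides an identity of the form \eqref{3.18} with $\widetilde{P}$ orthogonal satisfying $\widetilde{P}_{21} = \widetilde{U}(I_{n+1} - \widetilde{P}_{11})$; its lower $m\times n$ block reads $A\widetilde{V} + \widetilde{X}_{(2)} = \widetilde{P}_{21}\widetilde{B}$, where $\widetilde{X}_{(2)}$ denotes the last $m$ rows of $\widetilde{X}$. A key structural observation is that, by the construction in Lemma \ref{lem4.1}, the vectors $\tilde{u}_{k+2},\dots,\tilde{u}_{n+1}$ are orthonormal and orthogonal to $U_{k+1}$, and likewise for the $\tilde{v}_i$; consequently both $\widetilde{P}_{11}$ and $\widetilde{V}^T\widetilde{V}$ are block diagonal, with $\widetilde{P}_{11}=\mathrm{diag}(S_{k+1},O_{(n-k)\times(n-k)})$, where $S_{k+1}$ is the strictly upper triangular matrix of Theorem \ref{thm4.1} applied to $U_{k+1}$, and $\widetilde{V}^T\widetilde{V}=\mathrm{diag}(V_{k+1}^T V_{k+1}, I_{n-k-1})$.

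I would then define orthonormal $\bar{U}$ and $\bar{V}$ via the polar decompositions $\widetilde{P}_{21}=\bar{U}H$ with $H=(I_{n+1}-\widetilde{P}_{11}^T\widetilde{P}_{11})^{1/2}$, and $\widetilde{V}=\bar{V}K$ with $K=(\widetilde{V}^T\widetilde{V})^{1/2}$. Orthogonality of $\widetilde{P}$ forces $\bar{U}^T\bar{U}=I_{n+1}$, while $H$ and $K$ are positive definite because $\mu_{k+1},\nu_{k+1}<1/2$. Taking $\bar{U}_{k+1},\bar{V}_{k+1}$ to be the first $k+1$ columns, the block structure gives the explicit formulas
\begin{equation*}
\bar{U}_{k+1}=U_{k+1}(I-S_{k+1})(I-S_{k+1}^T S_{k+1})^{-1/2},\qquad \bar{V}_{k+1}=V_{k+1}(V_{k+1}^T V_{k+1})^{-1/2}.
\end{equation*}
A first-order Taylor expansion combined with $\|S_{k+1}\|\leq 2\mu_{k+1}$ (from Theorem \ref{thm4.1}) and $\|V_{k+1}^T V_{k+1}-I\|\leq 2\nu_{k+1}$ then delivers \eqref{4.6}.

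Next I would set $E=\bar{U}\widetilde{B}\bar{V}^T-A$ and $\delta_b=\beta_1 u_1-b$ (the unavoidable rounding in forming $u_1=b/\|b\|$, of size $O(\|b\|\mathbf{u})$). Since $\bar{U}^T\bar{U}=I$ and $\bar{V}^T\bar{V}=I$, these choices automatically force $(A+E)\bar{V}=\bar{U}\widetilde{B}$ and, by transposing, $(A+E)^T\bar{U}=\bar{V}\widetilde{B}^T$. Extracting the first $k$ columns of the former yields \eqref{4.15}, while the first $k+1$ columns of the latter, using the bidiagonal shape of $\widetilde{B}$ (whose $(k+1,k+1)$-entry is $\alpha_{k+1}$), yield \eqref{4.16}; the extra term $\alpha_{k+1}\bar{v}_{k+1}(e_{k+1}^{(k+1)})^T$ arises from the $(k+1)$-st column of $\widetilde{B}^T$. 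Relation \eqref{4.14} follows because strict upper triangularity of $S_{k+1}$ forces $(I-S_{k+1})e_1=e_1$ and $(I-S_{k+1}^T S_{k+1})^{-1/2}e_1=e_1$, so $\bar{u}_1=u_1$.

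Finally, to bound $\|E\|$ I would use the lower-block identity to rewrite $A=\bar{U}H\widetilde{B}K^{-1}\bar{V}^T-\widetilde{X}_{(2)}K^{-1}\bar{V}^T$, yielding $E=\bar{U}[\widetilde{B}-H\widetilde{B}K^{-1}]\bar{V}^T+\widetilde{X}_{(2)}K^{-1}\bar{V}^T$. The main technical obstacle is controlling $\|\widetilde{B}-H\widetilde{B}K^{-1}\|$: writing $H=I-\Delta_H$ with $\|\Delta_H\|=O(\mu_{k+1}^2)$ and $K^{-1}=I-\Delta_K$ with $\|\Delta_K\|\leq\nu_{k+1}+O(\nu_{k+1}^2)$ and expanding reduces this to terms of order $\|\widetilde{B}\|(\|\Delta_H\|+\|\Delta_K\|)$. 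The prerequisite bound $\|\widetilde{B}\|=O(\|A\|)$ follows from $\widetilde{B}=H^{-1}\bar{U}^T(A\widetilde{V}+\widetilde{X}_{(2)})$ together with Lemma \ref{lem4.1}'s bound on $\|\widetilde{X}\|$. Combining these estimates with $\|\widetilde{X}_{(2)}\|\leq\|\widetilde{X}\|$ delivers $\|E\|=O(\|A\|\sqrt{n}(k\mathbf{u}+k\nu_{k+1}+\mu_{k+1}))$ as claimed in \eqref{4.7}.
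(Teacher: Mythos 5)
Your proposal is correct and follows essentially the same strategy as the paper: both orthonormalize the blocks $\widetilde{P}_{21}$ and $\widetilde{V}$ from Lemma \ref{lem4.1} via polar decomposition (the paper does this through the SVD, which yields the same orthogonal polar factor), then extract the leading columns and read off the Lanczos recurrences. Your additional observations—that the block-diagonal structure of $\widetilde{P}_{11}$ and $\widetilde{V}^T\widetilde{V}$ gives explicit formulas for $\bar{U}_{k+1}$, $\bar{V}_{k+1}$, and that strict upper triangularity of $S_{k+1}$ forces $\bar{u}_1 = u_1$ exactly—are sound simplifications; the paper instead bounds $\delta_b$ and $E_1$ by a slightly different device (the factor $\bigl(C,\, I_m\bigr)$ with $\lVert\bigl(C,\, I_m\bigr)\rVert\le\sqrt{2}$), but both routes deliver the same final estimate on $\lVert E\rVert$.
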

\begin{proof}
	In finite precision arithmetic, we have $b + \delta_0 = \beta_{1}u_{1}$, where $\lVert \delta_0 \lVert = O(\lVert b \lVert \mathbf{u})$. 
	By the definition of $\widetilde{P}$, we have
	\begin{equation*}
		\begin{pmatrix}
			0_{n+1} \\
			b
		\end{pmatrix} + \delta_{1}
		= P_{1}
		\begin{pmatrix}
			\beta_{1} \\
			0_{m+n}
		\end{pmatrix} =
		\widetilde{P}
		\begin{pmatrix}
			\beta_{1} \\
			0_{m+n}
		\end{pmatrix} , \ \
		\delta_{1} =
		\begin{pmatrix}
			0_{n+1} \\
			\delta_0
		\end{pmatrix}.
	\end{equation*}
	Combining with \eqref{3.18}, we have
	\begin{equation}\label{4.9}
		\begin{pmatrix}
			0_{n+1} & O_{(n+1)\times n} \\
			b & A\widetilde{V}
		\end{pmatrix} +
		\begin{pmatrix}
			\widetilde{X}_{1} \\
			\widetilde{X}_{2}
		\end{pmatrix} =
		\begin{pmatrix}
			\widetilde{P}_{11} & \widetilde{P}_{12}  \\
			\widetilde{P}_{21} & \widetilde{P}_{22} \\
		\end{pmatrix}
		\begin{pmatrix}
			\bar{B} \\
			O_{m\times n}
		\end{pmatrix} =
		\begin{pmatrix}
			\widetilde{P}_{11}\bar{B} \\
			\widetilde{P}_{21}\bar{B}
		\end{pmatrix}
	\end{equation}
	where
	\begin{equation*}
		\begin{pmatrix}
			\widetilde{X}_{1} \\
			\widetilde{X}_{2}
		\end{pmatrix} =
		\begin{pmatrix}
			\delta_{0}, & \widetilde{X}
		\end{pmatrix} , \ \
		\bar{B} =
		\begin{pmatrix}
			\beta_{1}e_{n+1}^{(n+1)}, & \widetilde{B}
		\end{pmatrix} .
	\end{equation*}
	Since $\widetilde{P}$ is orthogonal, we have
	\begin{equation}\label{4.10}
		\widetilde{P}_{11}^{T}\widetilde{P}_{11}+\widetilde{P}_{21}^{T}\widetilde{P}_{21}=I_{n+1}.
	\end{equation}
	
	Now we construct two matrices $\bar{U}_{n+1}$ and $\bar{V}_{n}$. 
	Let the compact SVD of $\widetilde{P}_{21}$ is $\widetilde{P}_{21}=Y_{1}\Sigma Z^{T}$, where $Y=(Y_{1},Y_{2})\in \mathbb{R}^{m\times m}$ and $Z\in \mathbb{R}^{(n+1)\times (n+1)}$ are orthogonal matrices and $\Sigma =\mbox{diag}(\sigma_{1},\dots,\sigma_{n+1})$ satisfying $0\leq \sigma_{n+1} \leq\cdots\leq\sigma_{1}\leq 1$. Then by \eqref{4.10} we have
	\begin{equation*}
		Z^{T}\widetilde{P}_{11}^{T}\widetilde{P}_{11}Z=(I_{n+1}+\Sigma)(I_{n+1}-\Sigma) .
	\end{equation*}
	Define $\bar{U}_{n+1}=Y_{1}Z^{T} \in \mathbb{R}^{m\times (n+1)}$. Then $\bar{U}_{n+1}$ is an orthonormal matrix, and by the above equality we have
	\begin{equation}\label{4.11}
		\bar{U}_{n+1} - \widetilde{P}_{21}=
		Y_{1}(I_{n+1}-\Sigma)Z^{T} = Y_{1}(I_{n+1}+\Sigma)^{-1}Z^{T}\widetilde{P}_{11}^{T}\widetilde{P}_{11}.
	\end{equation}
	By \eqref{3.19} and \eqref{4.11} we obtain
	\begin{align*}
		\begin{split}
			\lVert \bar{U}_{n+1} - \widetilde{U}\lVert
			&= \lVert \bar{U}_{n+1} - \widetilde{P}_{21} - \widetilde{U}\widetilde{P}_{11} \lVert \\
			&\leq \lVert \bar{U}_{n+1} - \widetilde{P}_{21} \lVert + \lVert \widetilde{U}\widetilde{P}_{11} \lVert \\
			&\leq \lVert \widetilde{P}_{11}\lVert^{2} + \lVert \widetilde{P}_{11} \lVert(1+\mu_{k+1}) \\
			&\leq 4 \mu_{k+1}^{2} + 2\mu_{k+1}(1+\mu_{k+1}) \\
			&= 2\mu_{k+1} + O(\mu_{k+1}^{2}) .
		\end{split}
	\end{align*}
	By \eqref{4.9} and \eqref{4.11} we obtain
	\begin{align*}
		\bar{U}_{n+1}\bar{B} -
		\begin{pmatrix}
			b, &	A\widetilde{V}
		\end{pmatrix}
		= \bar{U}_{n+1}\bar{B} - (\widetilde{P}_{21}\bar{B}-\widetilde{X}_{2})
		= (\bar{U}_{n+1}-\widetilde{P}_{21})\bar{B}+\widetilde{X}_{2}
	\end{align*}
	and
	\begin{align*}
		(\bar{U}_{n+1}-\widehat{P}_{21})\bar{B}
		= Y_{1}(I_{n+1}+\Sigma)^{-1}Z^{T}\widehat{P}_{11}^{T}\widehat{P}_{11}\bar{B}
		= Y_{1}(I_{n+1}+\Sigma)^{-1}Z^{T}\widehat{P}_{11}^{T}\widetilde{X}_{1},
	\end{align*}
	which can be rewritten as
	\begin{equation}\label{error1}
		\bar{U}_{n+1}\bar{B} -
		\begin{pmatrix}
			b, &	A\widetilde{V}
		\end{pmatrix} =
		\begin{pmatrix}
			C, & I_{m}
		\end{pmatrix}
		\begin{pmatrix}
			\widetilde{X}_{1} \\
			\widetilde{X}_{1}
		\end{pmatrix} = \begin{pmatrix}
			C, & I_{m}
		\end{pmatrix}
		\begin{pmatrix}
			\delta_{0}, & \widetilde{X}
		\end{pmatrix} ,
	\end{equation}
	where $C=Y_{1}(I_{n+1}+\Sigma)^{-1}Z^{T}\widehat{P}_{11}^{T}$. Define 
	\begin{equation}\label{error2}
		\delta_{b} = \beta_{1}\bar{u}_{1}-b, \ \ \ 
		E_1 = \bar{U}_{n+1}\widetilde{B} - A\widetilde{V}.
	\end{equation}
	Then \eqref{error1} implies that
	\begin{align*}
		\delta_b =
		\begin{pmatrix}
			C, & I_{m}
		\end{pmatrix} \delta_{0}, \ \ \ 
		E_1 = \begin{pmatrix}
			C, & I_{m}
		\end{pmatrix}\delta X_{n}.
	\end{align*}
	Notice that
	$$CC^{T}+I_{m}=
	I_{m}+Y_{1}(I_{n+1}-\Sigma)(I_{n+1}+\Sigma)^{-1}Y_{1}^{T},$$
	and thus
	$\lVert
	\begin{pmatrix}
		C, & I_{m}
	\end{pmatrix} \lVert \leq \sqrt{2}$.
	Therefore,
	\begin{align*}
		& \lVert \delta_b \lVert \leq \lVert
		\begin{pmatrix}
			C, & I_{m}
		\end{pmatrix} \lVert \lVert \delta_{0} \lVert
		\leq \sqrt{2}\lVert \delta_{0} \lVert = O(\lVert b \lVert \mathbf{u})  ,  \\
		& \lVert E_1 \lVert \leq \lVert
		\begin{pmatrix}
			C, & I_{m}
		\end{pmatrix} \lVert \lVert \widetilde{X} \lVert
		\leq \sqrt{2}\lVert \widetilde{X} \lVert  .
	\end{align*}
	Let the SVD of $\widetilde{V}$ is $\widetilde{V}=K\Sigma_{1}J^{T}$. Define $\bar{V}_{n}=KJ^{T}\in \mathbb{R}^{n\times n}$. then $\bar{V}_{n}$ is orthogonal and
	\begin{equation*}
		\lVert \bar{V}_{n} - \widetilde{V} \lVert = \lVert I_{n+1} - \Sigma_{1} \lVert
		\leq 1- (1-2\nu_{k+1})^{1/2} = \nu_{k+1} +O(\nu_{k+1}^{2}) ,
	\end{equation*}
	where we have used $\sigma_{n}(\widetilde{V})\geq (1-2\nu_{k+1})^{1/2}$ since the orthogonality level of $\widetilde{V}$ is $\nu_{k+1}$; see \eqref{2.17}. By the definition of $E_{1}$ in \eqref{error1}, we have
	$A\widetilde{V}+E_1=\bar{U}_{n+1}\widetilde{B}$, which can be rewritten as
	\begin{equation}\label{equal1}
		(A+E)\bar{V}_{n} = \bar{U}_{n+1}\widetilde{B} \ \ \mbox{or} \ \
		(A+E)^{T}\bar{U}_{n+1} = \bar{V}_{n}\widetilde{B}^{T},
	\end{equation}	
	where
	$$E = [A(\widetilde{V}-\bar{V}_{n})+E_1]\bar{V}_{n}^{T},$$
	and thus
	$$\lVert E \lVert \leq \lVert A \lVert [\nu_{k+1} +O(\nu_{k+1}^{2})] +
	\sqrt{2}\lVert \widetilde{X} \lVert =O(\lVert A \lVert\sqrt{n} (k\mathbf{u}+k\nu_{k+1}+\mu_{k+1})) .$$
	Let $\bar{U}_{k+1}=\bar{U}_{n+1}(:,1:k+1)$ and $\bar{V}_{k+1}=\bar{V}_{n}(:,1:k+1)$. By using $\|\bar{U}_{k+1}-U_{k+1}\|\leq \lVert \bar{U}_{n+1} - \widetilde{U}\lVert$ and $\|\bar{V}_{k+1}-V_{k+1}\|\leq \lVert \bar{V}_{n} - \widetilde{V}\lVert$ we obtain \eqref{4.6}. By equating the first $k$ and $k+1$ columns of the two equalities of \eqref{equal1} respectively, we obtain \eqref{4.15} and \eqref{4.16}. Note that $\beta_{1}\bar{u}_{1}=\bar{U}_{k+1}(\beta_{1}e_{1}^{(k+1)})$. By \eqref{error2} we finally obtain \eqref{4.14}.
\end{proof}

Theorem \ref{thm4.2} is a generalization of \cite[Theorem 5.2]{Barlow2013} that deals with upper Lanczos bidiagonalization with one-sided reorthogonalization. There are three main improvements. First, our result can apply to a general reorthogonalization strategy, and by letting $\bar{c}_k=0_k$ for $k=1,2,\dots$ that corresponds to the one-sided reorthogonalization case we can obtain a similar upper bound on $\|E\|$ as \cite[Theorem 5.2]{Barlow2013}. Second, apart from giving an upper bound on $\|\bar{V}_{k+1}-V_{k+1}\|$, our result also give a similar upper bound on $\|\bar{U}_{k+1}-U_{k+1}\|$. Finally, although \cite[Theorem 5.2]{Barlow2013} is about the $k$-step process, upper bounds on $\|E\|$ and $\|\bar{V}_{k}-V_{k}\|$ there depend on the orthogonality level of $V_n$ that can only be known after the $n$-step process has finished, while our result, in contrast, can really apply to the $k$-step LBRO for $1\leq k \leq n$ due to the help of Lemma \ref{lem4.1}.

Notice that the relations \eqref{4.14}--\eqref{4.16} are matrix-form recurrences of the $k$-step Lanczos bidiagonalization of $A+E$ with starting vector $b+\delta_b$ (denoted by LB($A+E,b+\delta_{b}$)) in exact arithmetic. If the orthogonality levels of $U_{k+1}$ and $V_{k+1}$ are kept around $O(\mathbf{u})$, which corresponds to the full reorthogonalization case, then $B_k$ is the exact one generated by the $k$-step LB($A+E,b+\delta_{b}$) with $\|E\|/\|A\|, \|\delta_{b}\|/\|b\|=O(\mathbf{u})$. Following Higham \cite[\S 1.5] {Higham2002}, we could call the $k$-step LBRO \textit{mixed forward-backward stable} as long as the orthogonality of $U_{k+1}$ and $V_{k+1}$ are good enough \footnote{The mixed forward-backward stability can be illustrated by the following example. Suppose a method is used to compute $y=f(x)$ for $f:\mathbb{R}\rightarrow \mathbb{R}$. Then the method is called mixed forward-backward stable if the computed value $\widehat{y}$ satisfies $\widehat{y}+\Delta y=f(x+\Delta x)$ where $|\Delta y|/|y|$ and $|\Delta x|/|x|$ are sufficiently small.}. This result can be used to analyze backward stability of LBRO based algorithms, such as bidiagonalization based algorithms for computing a partial SVD or LSQR for iteratively solving least squares problems.

\section{Applications to SVD computation and LSQR}\label{sec4}
In this section, we use the above results to investigate backward stability of LBRO based algorithms including partial SVD computation and LSQR. For simplicity, we only consider rounding errors in the LBRO, while other parts of an algorithm is supposed to be performed in exact arithmetic.

We first review the partial SVD computation of $A$ based on the Lanzos bidiagonalization in exact arithmetic. We add `` $\hat{}$ " to $\alpha_{i}$, $u_{i}$, $B_{k}$, etc. to denote the corresponding quantities in exact arithmetic. In order to approximate some singular values and corresponding vectors of $A$, we first compute the compact SVD of $\hat{B}_{k}$:
\begin{equation}\label{5.1}
	\hat{B}_{k} = H_{k}\Theta_{k}Z_{k}^{T}, \ \ \Theta_{k}=\mbox{diag}(s_{1}^{(k)}, \dots, s_{k}^{(k)}), \ \ 
	s_{1}^{(k)} > \dots > s_{k}^{(k)} > 0 ,
\end{equation}
where $H_{k}=(h_{1}^{(k)}, \dots, h_{k}^{(k)})\in\mathbb{R}^{(k+1)\times k}$ and $Z_{k}=(z_{1}^{(k)}, \dots, z_{k}^{(k)})\in\mathbb{R}^{k\times k}$ are orthonormal, and $\Theta_{k}\in\mathbb{R}^{k\times k}$. Then the approximate singular values of $A$ are $s_{i}^{(k)}$, which are often called Ritz values, and the corresponding approximate left and right singular vectors are $x_{i}^{(k)}=\hat{U}_{k+1}h_{i}^{(k)}$ and $y_{i}^{(k)}=\hat{V}_{k}z_{i}^{(k)}$, respectively, where $i=1,\dots,k$. The above decomposition can be achieved by a direct method since $\hat{B}_{k}$ is a matrix of small scale, and it is appropriate to compute some extreme singular values and vectors \cite{Larsen1998}.


In finite precision arithmetic, the above procedure is suffered from the ``ghosts" phenomenon, which means that some of the converged Ritz values suddenly ``jump" to become a ghost and then converge to the next larger or smaller singular values after a few iterations. These redundant copies of Ritz values are usually called ``ghosts" and this phenomenon results to many unwanted spurious copies of singular values and make it difficult to determine whether these spurious copies are real multiple singular values. This problem is closely related to the loss of orthogonality of Lanczos vectors, which can be avoided by using some types of reorthogonalization strategies \cite{Larsen1998,Simon2000}. The full reorthogonalization strategy is often used to mimic the Lanczos bidiagonalization in exact arithmetic. In the following we give an analysis of
full reorthogonalization for partial SVD computation in finite precision arithmetic.


Suppose the Lanczos bidiagonalization is implemented with full reorthogonalization and the orthogonality levels of $U_{k+1}$ and $V_{k+1}$ are kept around $O(\mathbf{u})$. By Theorem \ref{thm4.2}, $s_{i}^{(k)}$ are Ritz values of $\widetilde{A}=A+E$ and they will converge to the singular values of $\widetilde{A}$. Notice that $\|E\|=O(\|A\|\sqrt{n}k\mathbf{u})$. By the perturbation theory of singular values we have
\begin{equation}\label{5.2}
	\dfrac{|\sigma_{i}(A)-\sigma_{i}(\widetilde{A})|}{\sigma_{i}(A)}=\dfrac{|\sigma_{i}(A)-\sigma_{i}(\widetilde{A})|}{\|A\|}\cdot \dfrac{\sigma_{1}(A)}{\sigma_{i}(A)}\leq  \dfrac{\sigma_{1}(A)}{\sigma_{i}(A)}\cdot O(\sqrt{n}k\mathbf{u})
\end{equation}
for $\sigma_{i}(A)>0$. Therefore, the several largest Ritz values $s_{i}^{(k)}$ will approximate largest singular values of $A$ with relative error around $O(\mathbf{u})$. If $\sigma_{i}(A)$ has multiplicity bigger than one, then the several converged Ritz values corresponding to $\sigma_{i}(A)$ are not strictly equal. By \eqref{5.2}, with the help of full reorthogonalization, errors between these Ritz values will be so small that they all look like approximations to the same $\sigma_{i}(A)$, but they actually differ by a value of $O(\sqrt{n}k\mathbf{u})$. If the algorithm is implemented in a lower precision arithmetic, such as single precision instead of double precision, then separations between those converged Ritz values corresponding to a multiple singular value will be observed more obviously. For a very small singular value $\sigma_{i}(A)$ that has multiplicity bigger than one, the converged Ritz values corresponding to $\sigma_{i}(A)$ will have bigger relative errors since $\sigma_{1}(A)/\sigma_{i}(A)$ is very big. The above discussion will be illustrated by using an numerical example later.

Now we investigate the LSQR that is based on the Lanczos bidiagonalization for iteratively solving a large scale linear least squares problem
\begin{equation}
	\min\limits_{x \in \mathbb{R}^{n}} \lVert Ax - b \lVert .
\end{equation}
In exact arithmetic, the problem is projected to the Krylov subspace $\mathcal{K}_{k}(A^{T}A,A^{T}b)=\mathrm{span}(\hat{V}_{k})$ to become a $k$-dimension small scale problem. Denoting a vector in $\mathrm{span}(\hat{V}_{k})$ by $x=\hat{V}_{k}y$ where $y\in\mathbb{R}^{k}$, we have
\begin{align*}
	\begin{split} \ \ \ \ \min\limits_{x=\hat{V}_{k}y} \lVert Ax - b \lVert
		&= \min\limits_{y\in \mathbb{R}^{k}} \lVert A\hat{V}_{k}y -
		\hat{\beta}_{1}U_{k+1}e_{1}^{(k+1)}\lVert \\
		&= \min\limits_{y\in \mathbb{R}^{k}} \lVert \hat{U}_{k+1}\hat{B}_{k}y -
		\hat{\beta}_{1}\hat{U}_{k+1}e_{1}^{(k+1)}\lVert \\
		&= \min\limits_{y\in \mathbb{R}^{k}} \lVert \hat{B}_{k}y -\hat{\beta}_{1}e_{1}^{(k+1)}\lVert ,
	\end{split}
\end{align*}
and thus the $k$-th iterative solution of the LSQR is
$$\hat{x}_{k}=\hat{V}_{k}\hat{y}_{k} , \ \
	\hat{y}_{k}=\mathrm{arg}\min\limits_{y\in \mathbb{R}^{k}} \lVert \hat{B}_{k}y -\hat{\beta}_{1}e_{k+1}^{(k+1)}\lVert.$$
There is a formula that can recursively update $\hat{x}_{k}$ to $\hat{x}_{k+1}$ very efficiently, which avoids solving the $k$-dimension small-scale problem at each iteration \cite{Bjorck1988}. 

In finite precision arithmetic, the convergence of LSQR can be slowed down significantly due to the loss of orthogonality of Lanczos vectors. Maintaining a certain level of orthogonality among Lanczos vectors will accelerate the convergence at the expense of a bit more computational costs and storage requirements \footnote{In full reorthogonalization, $u_k$ and $v_k$ are reorthogonalized against all previous vectors $\{u_1,\dots, u_{k-1}\}$ and $\{v_1,\dots, v_{k-1}\}$ as soon as they have been computed. This adds an arithmetic cost of about $4(m + n)k^2$ flops, which is affordable if $k \ll\min\{m,n\}$.}. For example, 
the LSQR is usually implemented with full reorthogonalization to maintain stability of convergence for solving discrete linear ill-posed problems, since it usually takes not too many iterations to reach the semi-convergence point \cite{Hnetyn2009}. 

We analyze the LSQR with full reorthogonalization for solving least squares problems. In finite precision arithmetic, the $k$-th iteration LSQR computes
\begin{equation*}
	x_{k} = V_{k}y_{k} , \\ \ \
	y_{k} = \mathrm{arg}\min\limits_{y\in \mathbb{R}^{k}} \lVert B_{k}y - \beta_{1}e_{1}^{(k+1)} \lVert .
\end{equation*}
By \eqref{4.14} and \eqref{4.15}, we have
\begin{align*}
	\begin{split}
		y_{k}
		= \mathrm{arg}\min\limits_{y\in \mathbb{R}^{k}} \lVert \bar{U}_{k+1}B_{k}y - \bar{U}_{k+1}\beta_{1}e_{1}^{(k+1)} \lVert 
		= \mathrm{arg}\min\limits_{y\in \mathbb{R}^{k}} \lVert (A+E)\bar{V}_{k}y-(b+\delta_b) \lVert .
	\end{split}
\end{align*}
Therefore, we get $\bar{x}_{k} = \mathrm{arg}\min\limits_{x = \bar{V}_{k}y} \lVert (A+E)x- (b+\delta_b) \lVert$ where $\bar{x}_{k}=\bar{V}_{k}y_{k}$, which implies that
$\bar{x}_{k}$ is the $k$-th step LSQR solution in exact arithmetic to the perturbed problem
\begin{equation}\label{5.7}
	\min\limits_{x \in \mathbb{R}^{n}} \lVert (A+E)x-
	(b+\delta_b) \lVert .
\end{equation}
Notice from \eqref{4.6} that 
$$\lVert \bar{x}_{k}-x_{k}\lVert \leq \|V_{k}-\bar{V}_{k}\|\|y_{k}\|\leq  \|y_{k}\|(\nu_{k}+O(\nu_{k}^{2})).$$
Using $\|y_{k}\|=\|\bar{x}_{k}\|$ we obtain
\begin{equation}
	\dfrac{\|x_{k}-\bar{x}_{k}\|}{\|\bar{x}_{k}\|}\leq \nu_{k}+O(\nu_{k}^{2}).
\end{equation}
If we keep the orthogonality of $V_{k}$ around $O(\mathbf{u})$, then $x_{k}\approx \bar{x}_{k}$ and thus $x_{k}$ can be treated as the $k$-step LSQR solution to the perturbed problem \eqref{5.7}. This indicates the backward stability property of LSQR if full reorthogonalization is used.

\section{Numerical experiments}\label{sec5}
In this section, we use several numerical examples to illustrate our results. Since the error matrix $\|E\|$ in Theorem \ref{thm4.2} can not be explicitly calculated, we only show the values of $\|X_{k}\|$ in Theorem \ref{thm3.1}. In order to confirm the upper bound on $\|E\|$ implicitly, we use an example to illustrate the relation \eqref{5.2} for partial SVD computations. The following four matrices are taken from \cite{Davis2011}, and a description of them is in Table \ref{tab1}, where $\kappa(\cdot)$ is the condition number of a matrix. All the computations are carried out in MATLAB R2019b, where the roundoff unit for double precision arithmetic is $\mathbf{u}=2^{-53}\approx 1.11\times 10^{-16}$. All starting vectors are chosen as $b=(1,\dots,1)\in\mathbb{R}^{m}$.

\begin{table}[htp]
	\centering
	\caption{Properties of the test matrices.}
	\begin{tabular}{l|lllll}
		\toprule
		Index & $A$	 &$m\times n$		 &$\|A\|$	&$\kappa(A)$  & Description	 \\   
		\hline
		1 & {\sf nos3}	 & $960\times 960$	& 689.904 & 37723.6 & structural problem	\\  
		2 & {\sf well1850}  &$1850\times 712$   & 1.79433 & 111.313 & least squares problem  \\ 
		3 & {\sf lshp2614}    &$2614\times 2614$  & 6.98798  & 5197.35  & thermal problem  \\  
		4 & {\sf c-23} &$3969\times 3969$ & 1089.71 & 22795.9 & optimization problem  \\  \bottomrule
	\end{tabular}
	\label{tab1}
\end{table}

\begin{figure}[htp]
	\centering
	\begin{subfigure}[t]{0.35\textwidth}
		\centering
		\includegraphics[width=\textwidth]{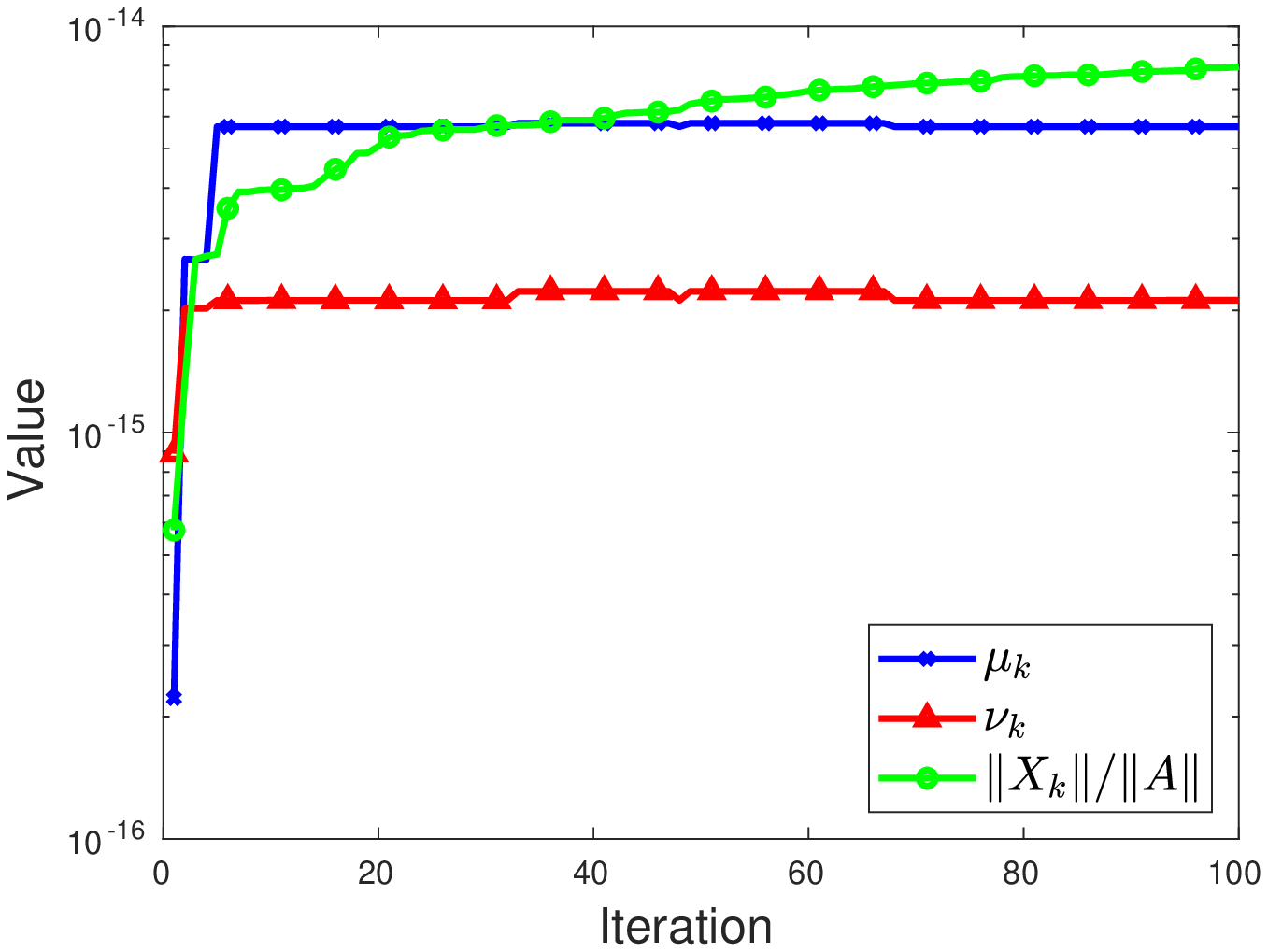}
		\caption{}
	\end{subfigure}
	\begin{subfigure}[t]{0.35\textwidth}
		\centering
		\includegraphics[width=\textwidth]{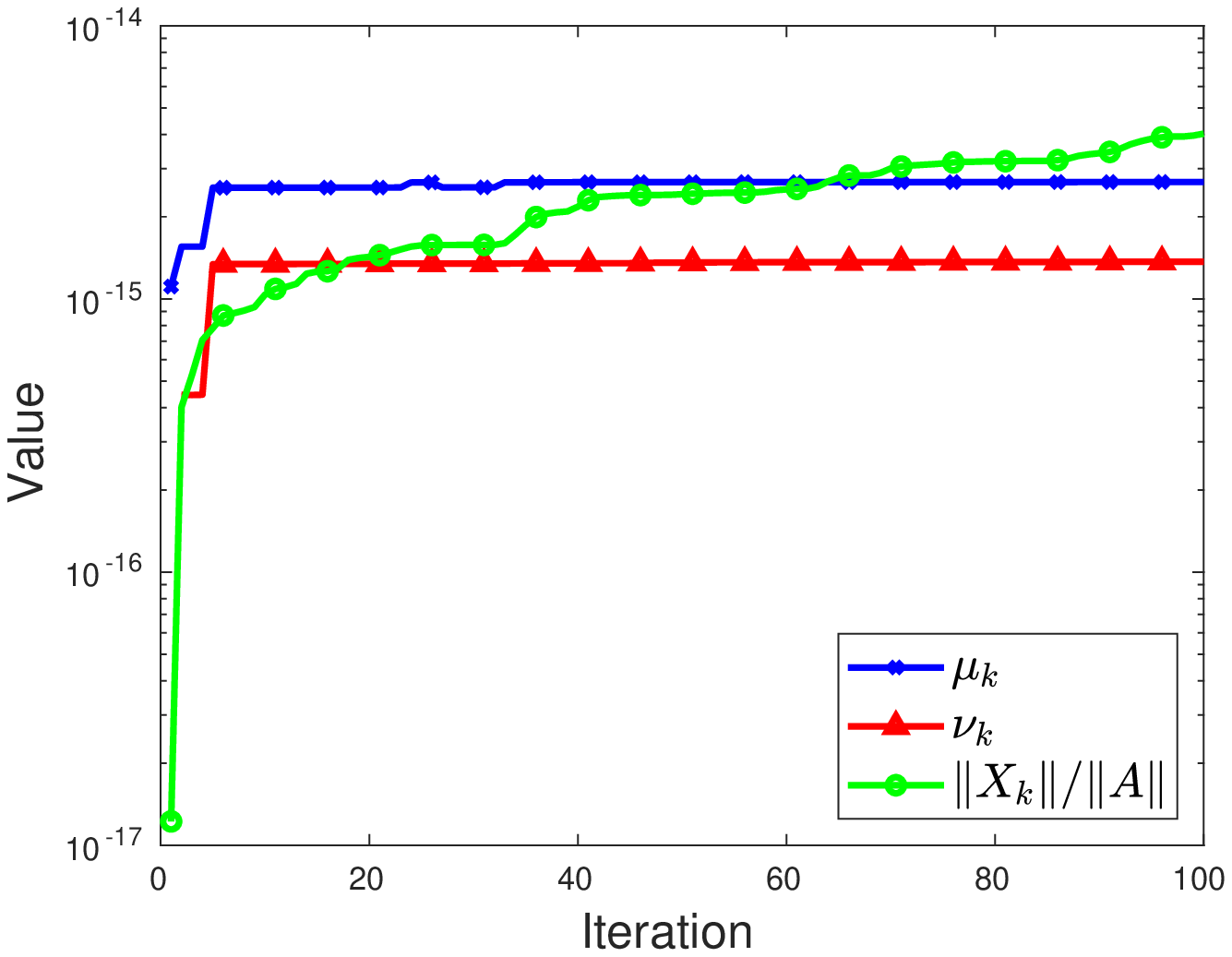}
		\caption{}
	\end{subfigure}
	\begin{subfigure}[t]{0.35\textwidth}
		\centering
		\includegraphics[width=\textwidth]{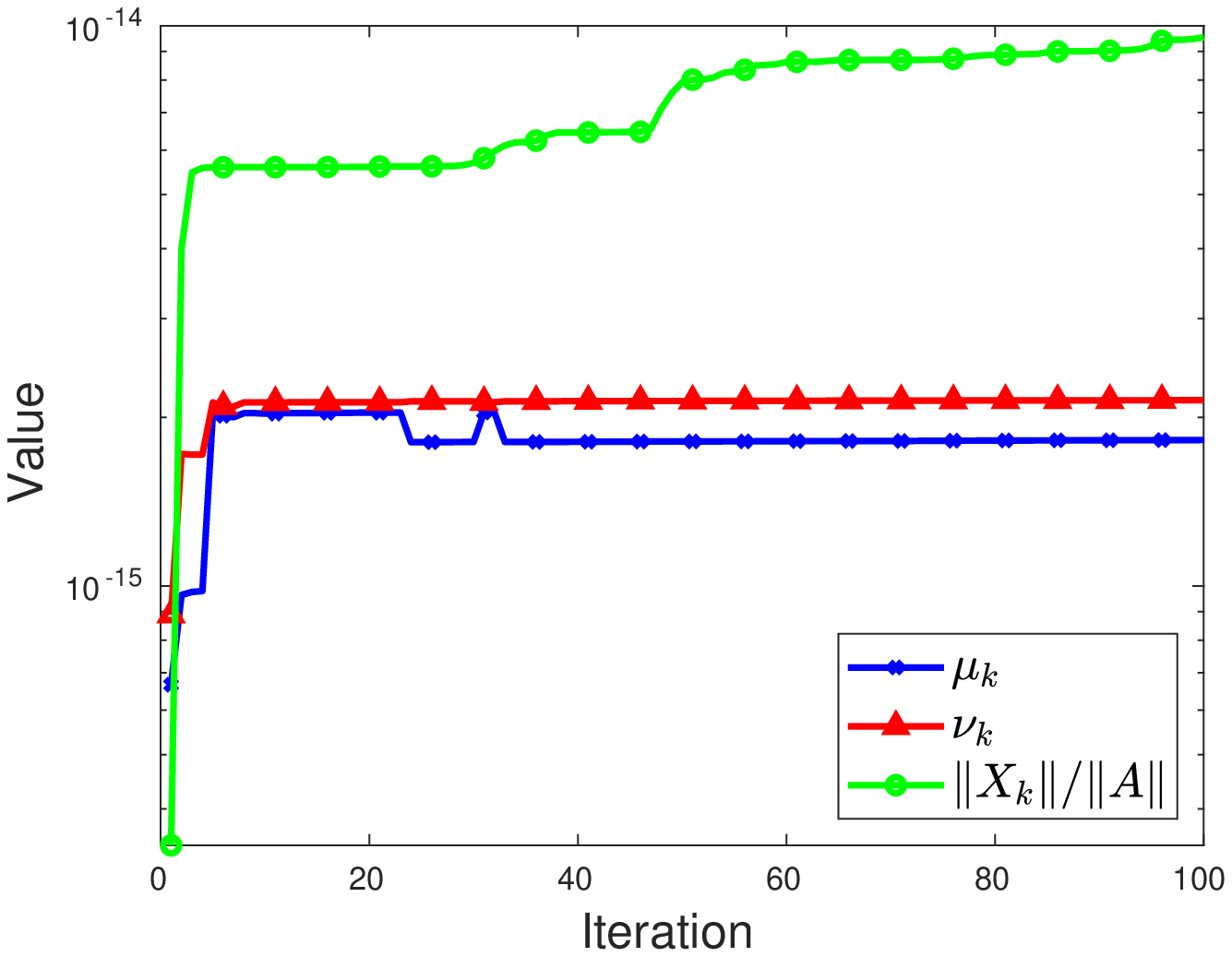}
		\caption{}
	\end{subfigure}
	\begin{subfigure}[t]{0.35\textwidth}
		\centering
		\includegraphics[width=\textwidth]{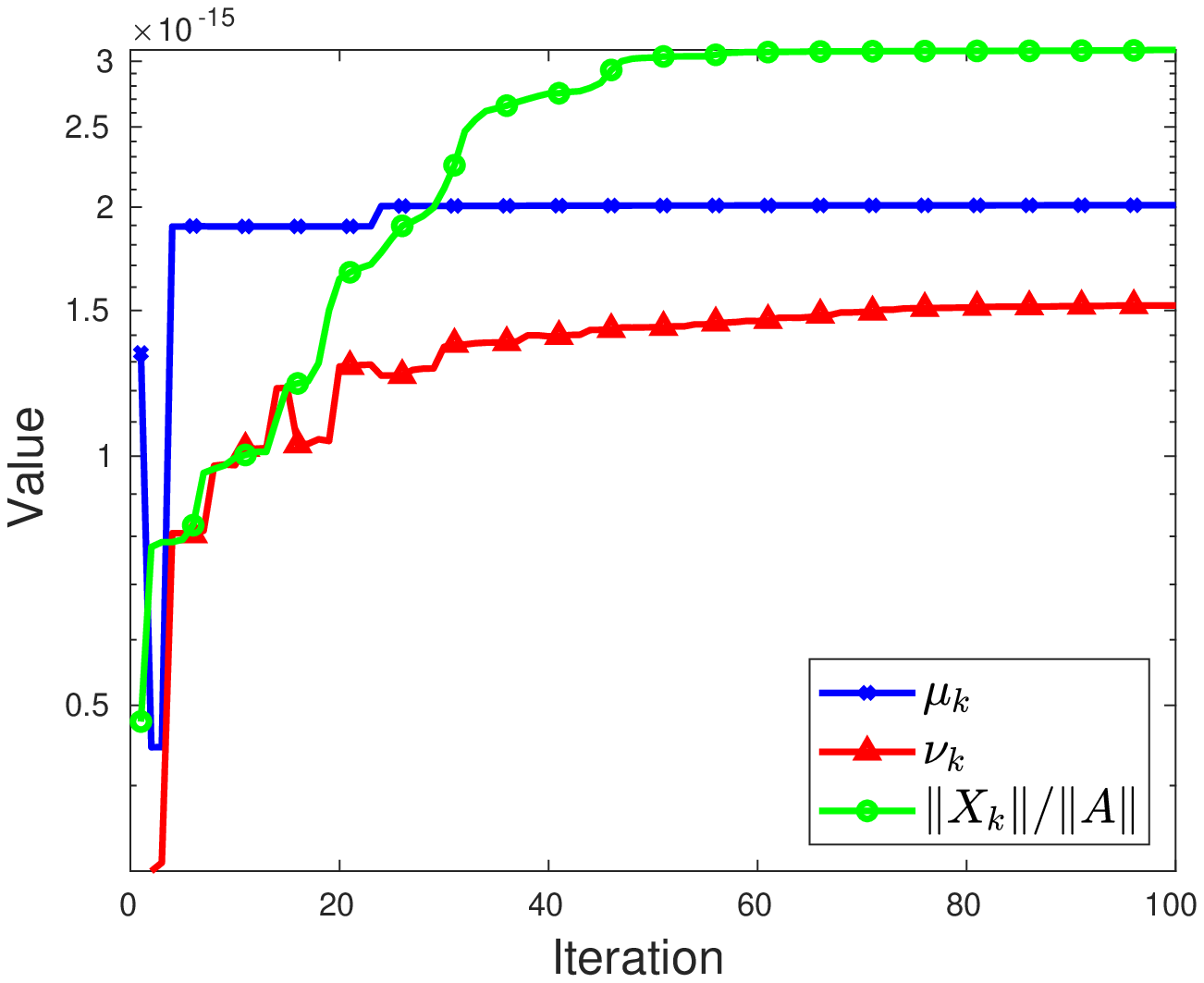}
		\caption{}
	\end{subfigure}
	\caption{Values of $\|{X}_{k}\|/\|A\|$ for full reorthogonalization: (a) {\sf nos3}; (b) {\sf well1850}; (c) {\sf lshp2614}; (d) {\sf c-23}.}
	\label{fig1}
\end{figure}

Figure \ref{fig1} depicts the values of $\|{X}_{k}\|/\|A\|$ and orthogonality levels of $U_k$ and $V_k$ as the iterations progress from $1$ to $100$, where the LBRO is implemented using full reorthogonalization. For the four matrices, the orthogonality levels of $U_k$ and $V_k$ are maintained around $O(\mathbf{u})$. Therefore, by the upper bound in \eqref{3.3}, $\|{X}_{k}\|/\|A\|$ should be around $O(\mathbf{u})$, which can be observed from the four subfigures.

\begin{figure}[htp]
	\centering
	\begin{subfigure}[t]{0.35\textwidth}
		\centering
		\includegraphics[width=\textwidth]{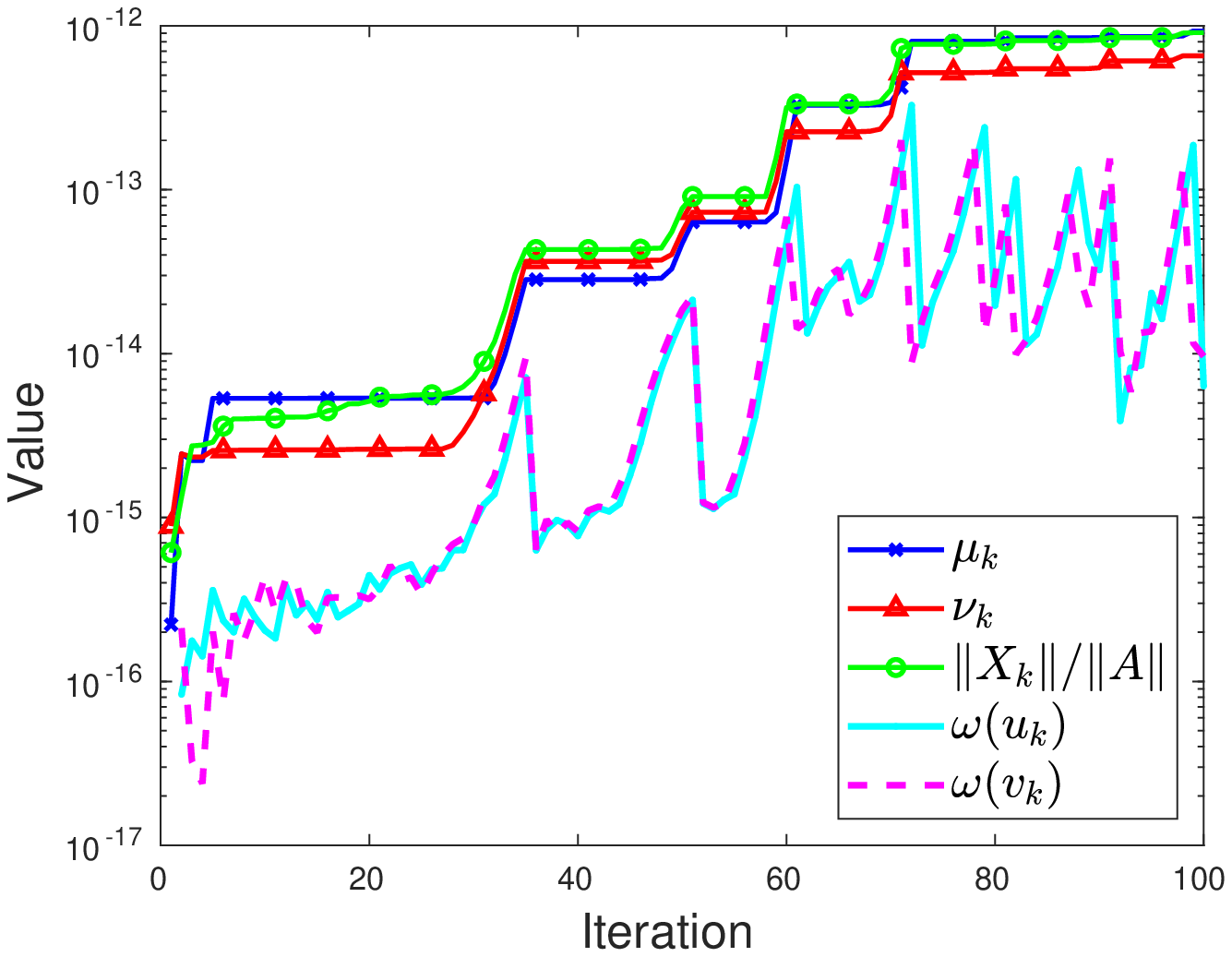}
		\caption{}
	\end{subfigure}
	\begin{subfigure}[t]{0.35\textwidth}
		\centering
		\includegraphics[width=\textwidth]{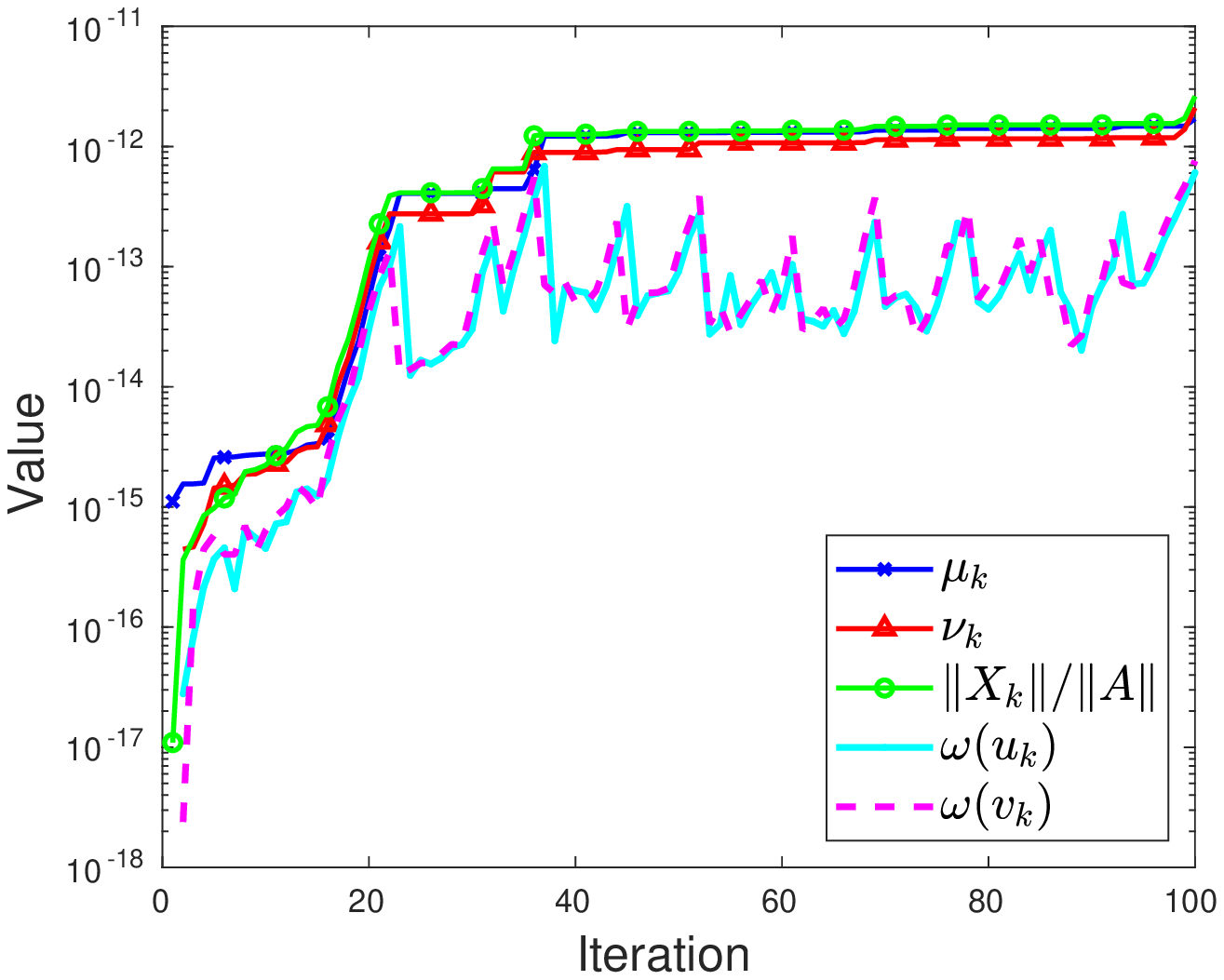}
		\caption{}
	\end{subfigure}
	\begin{subfigure}[t]{0.35\textwidth}
		\centering
		\includegraphics[width=\textwidth]{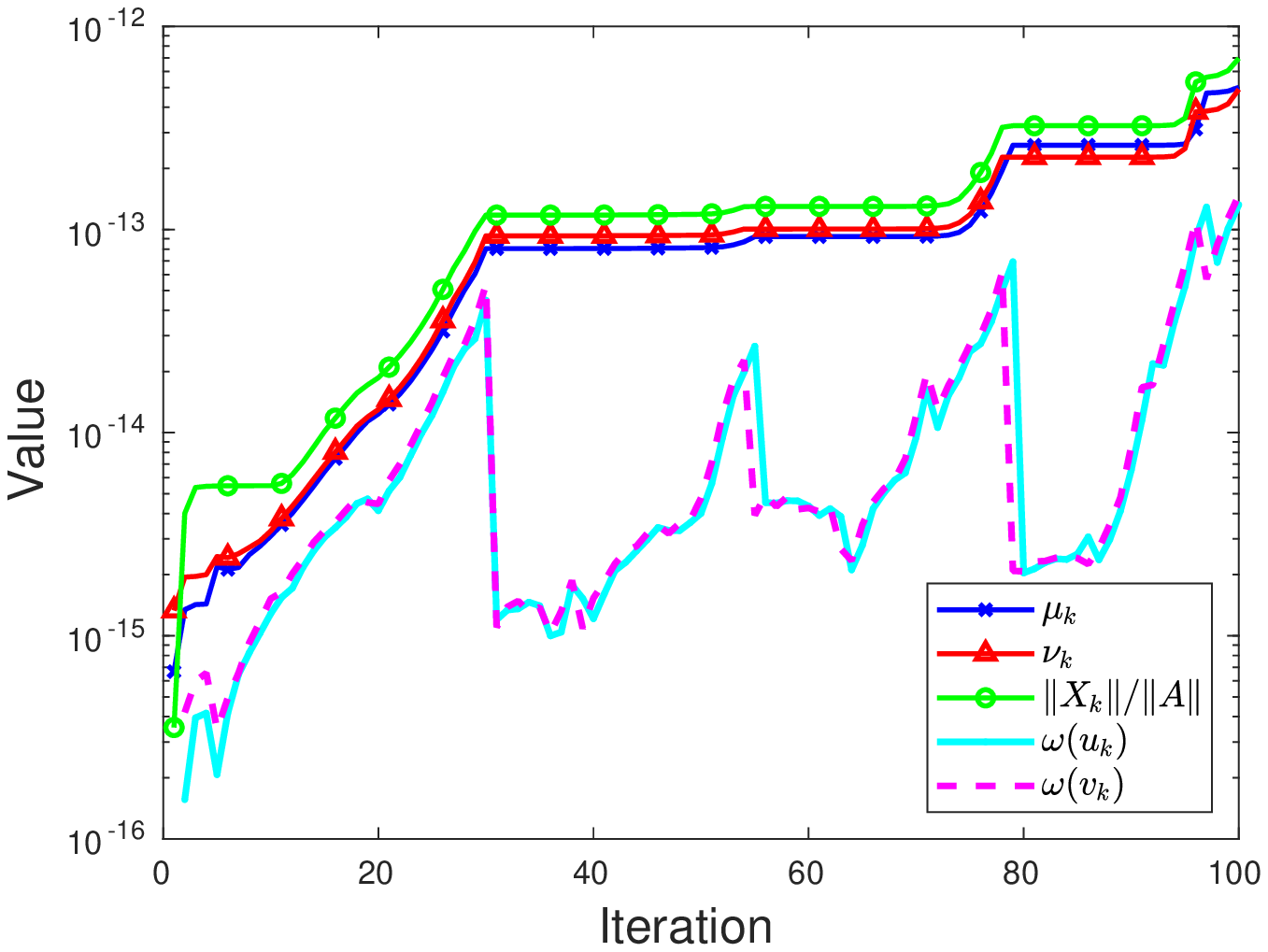}
		\caption{}
	\end{subfigure}
	\begin{subfigure}[t]{0.35\textwidth}
		\centering
		\includegraphics[width=\textwidth]{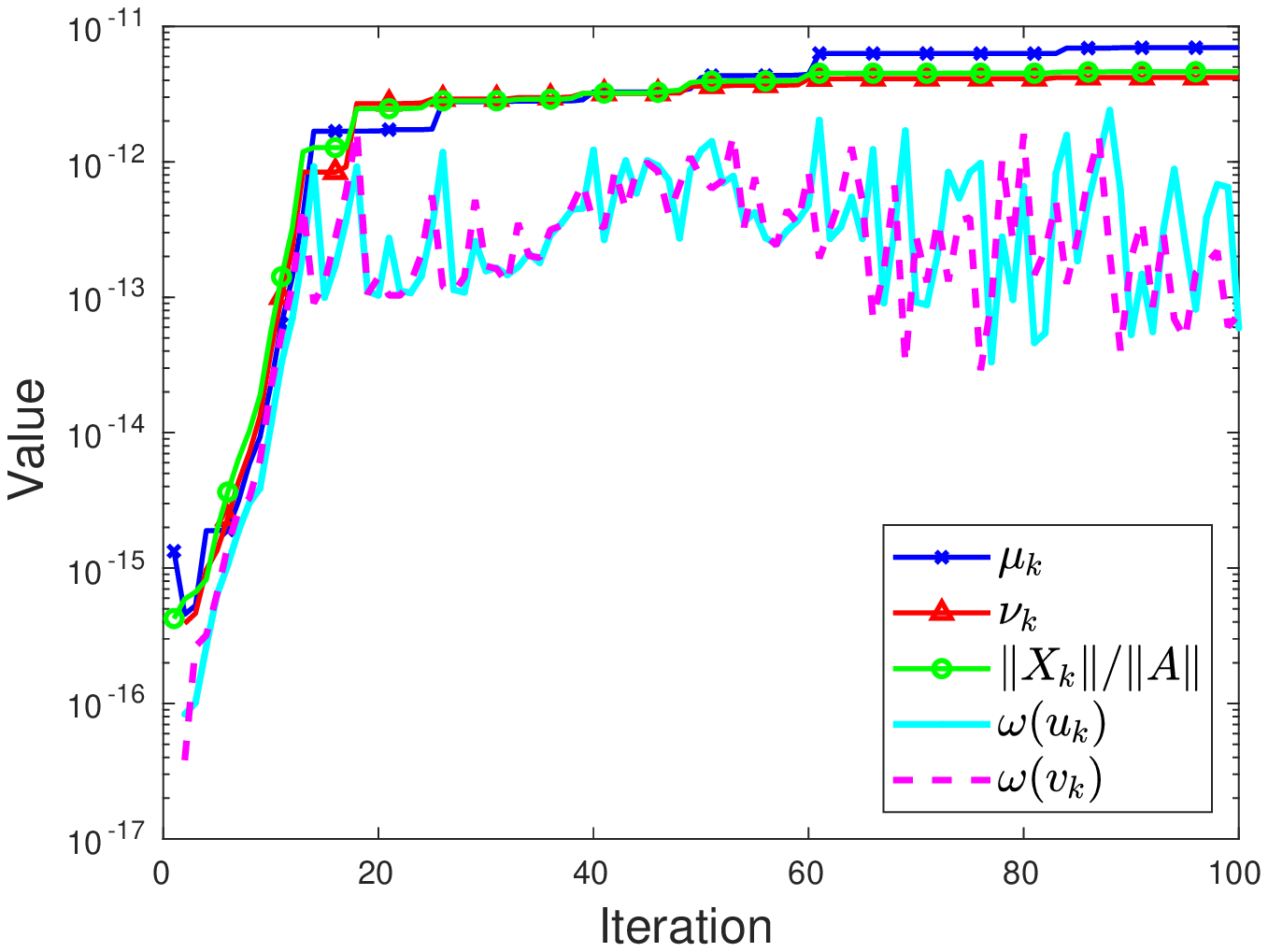}
		\caption{}
	\end{subfigure}
	\caption{Values of $\|{X}_{k}\|/\|A\|$ for partial reorthogonalization: (a) {\sf nos3}; (b) {\sf well1850}; (c) {\sf lshp2614}; (d) {\sf c-23}.}
	\label{fig2}
\end{figure}

Then we use partial reorthogonalization to implement the LBRO by using \texttt{lanbpro.m} in the \texttt{PROPACK} package \cite{Larsen1998}. Figure \ref{fig2} depicts the values of $\|{X}_{k}\|/\|A\|$ as well as $\mu_k$ and $\nu_k$. We also depict the orthogonality levels of $u_k$ and $v_k$ defined as
$$\omega(u_{k})=\max_{1\leq i< k}|u_{i}^{T}u_{k}|, \ \ \
	\omega(v_{k})=\max_{1\leq i< k}|v_{i}^{T}v_{k}|,$$
respectively. Note that $\omega(u_{i})\leq \mu_{k}\leq k\max_{1\leq i \leq k}\omega(u_{i})$ and this quantity is used to measure orthogonality between $u_k$ and previous Lanczos vectors. From the figure we find that the values of $\mu_k$ and $\nu_k$ grow slowly and eventually stabilize around a value.  This is because we set \texttt{OPTIONS.eta=1e-10} in \texttt{lanbpro.m}, which can keep orthogonality levels below $10^{-10}$ after reorthogonalization. We can also find that $\|{X}_{k}\|/\|A\|$ grows in consistent with $\mu_k$ and $\nu_k$, which confirms the upper bound \eqref{3.3} again. We find that $\omega(u_{k})$ and $\omega(v_{k})$ grow gradually at first, then they suddenly jump down and reorthogonalization is not used in a few later steps until the orthogonality becomes bad again. This is because partial reorthogonalizations occurs only when necessary and only a part of previous Lanczos vectors are included in the reorthogonalization steps. 

Finally we use an example to illustrate the relation \eqref{5.2}. The matrix $A$ is constructed as follows. Let $m=n=800$. First construct a row vector $s$ such that $s(1) = s(2) = 1.0, s(3)=0.95, s(n-2)=0.1,s(n-1)=s(n) = 10^{-4}$ and $\texttt{s(4:n-3) = linspace(0.90,0.15,n-6)}$ generated by the MATLAB built-in function \texttt{linspace()}, and then let $\texttt{S=diag(s)}$. Let $P$ and $Q$ be two symmetric orthogonal matrices generated by the MATLAB built-in functions $\texttt{P = gallery(`orthog',n,1)}$ and $\texttt{Q = gallery(`orthog',n,2)}$, respectively. Finally let $A=PSQ^{T}$. By the construction,  the $i$-th largest singular value of $A$ is $s_{i}$, and the multiplicities of singular values $\sigma_{1}=1.0$ and $\sigma_{n}=10^{-4}$ is $2$. In the implementations, we first store both $A$ and $b$ in double precision, and then the LBRO using full reorthogonalization is implemented in double and single precision arithmetic, respectively, and all the other computations are taken in double precision.

\begin{figure}[htp]
	\centering
	\begin{subfigure}[t]{0.35\textwidth}
		\centering
		\includegraphics[width=\textwidth]{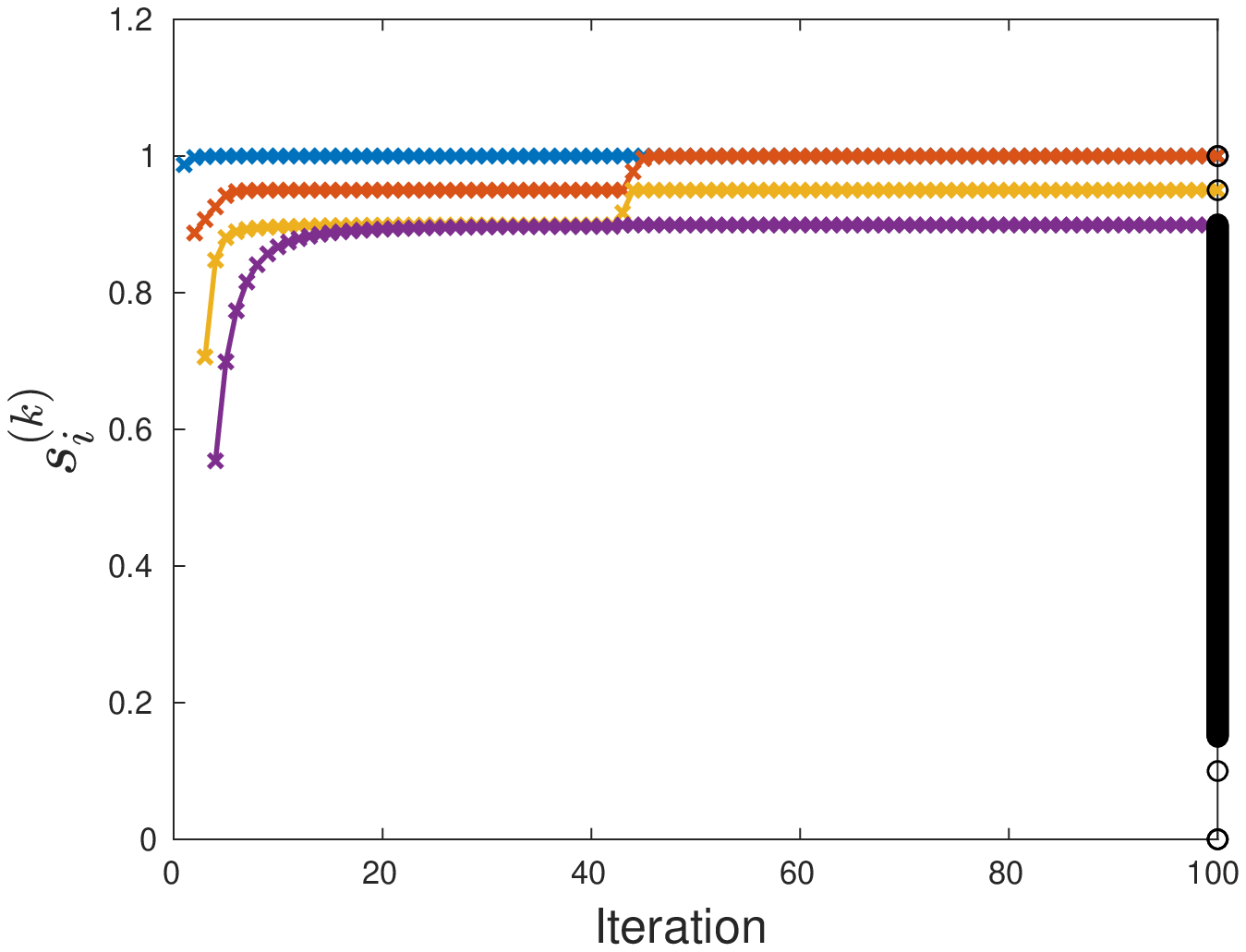}
		\caption{} 
		\label{subfig3a}
	\end{subfigure}
	\begin{subfigure}[t]{0.35\textwidth}
		\centering
		\includegraphics[width=\textwidth]{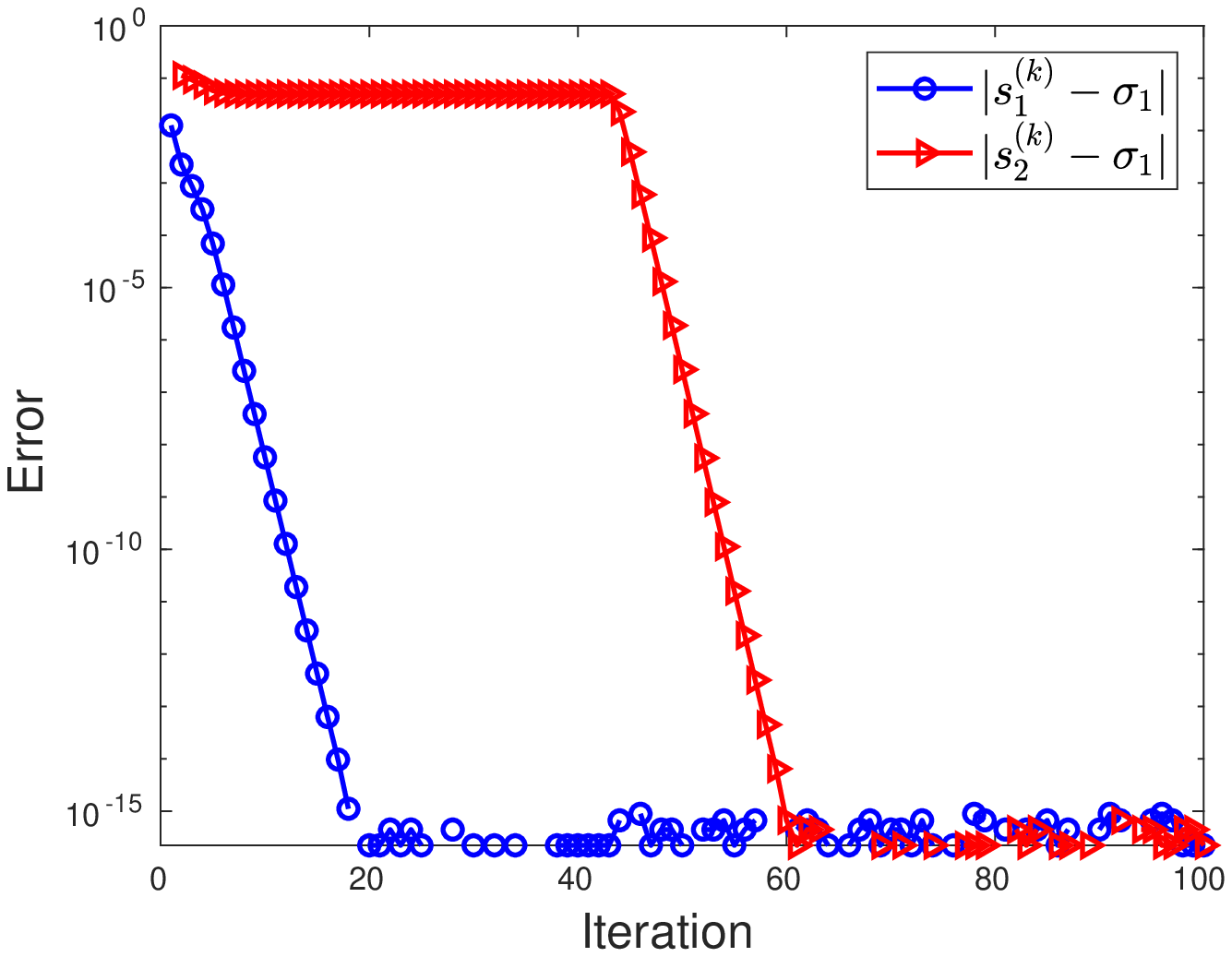}
		\caption{}
	\end{subfigure}
	\begin{subfigure}[t]{0.35\textwidth}
		\centering
		\includegraphics[width=\textwidth]{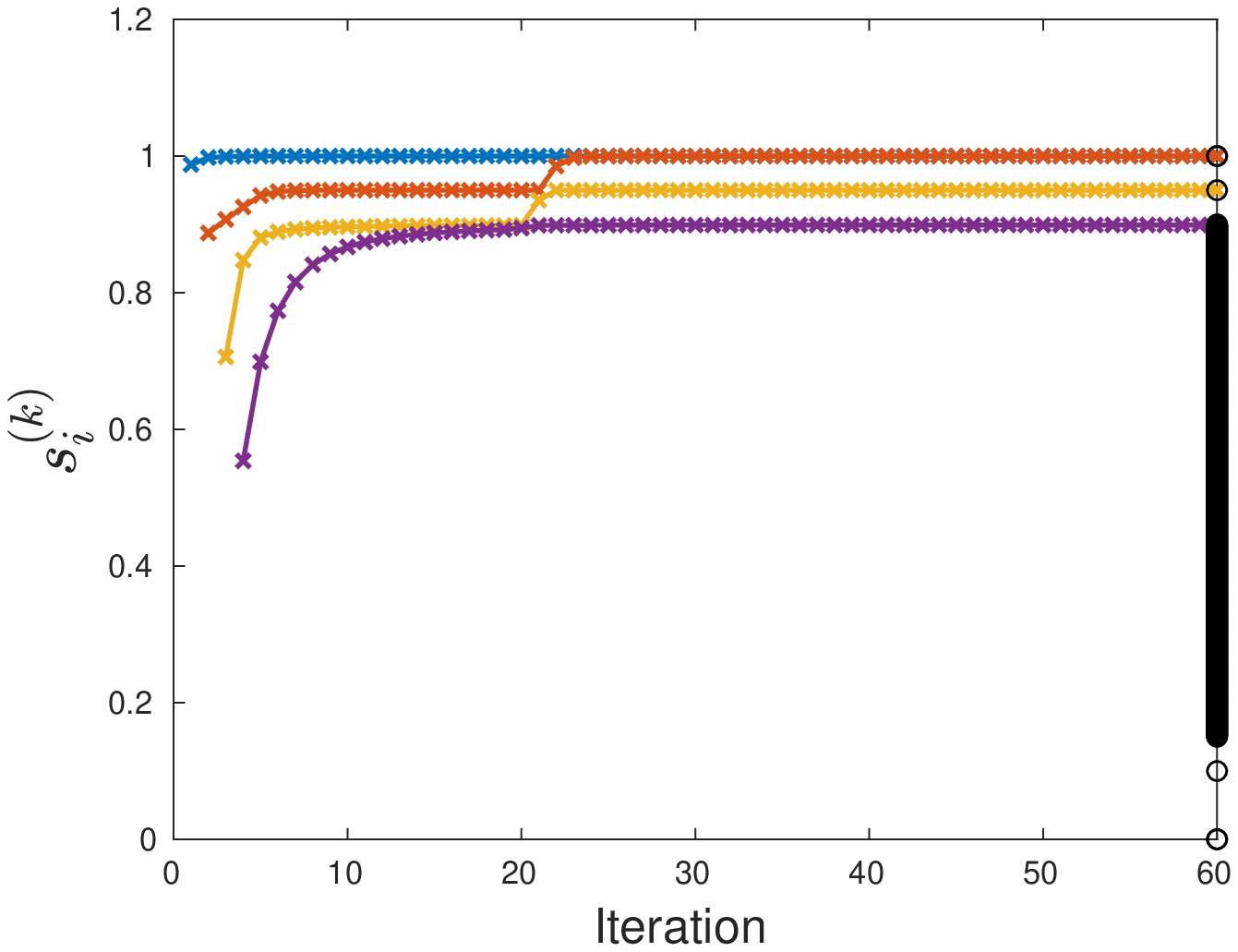}
		\caption{}
		\label{subfig3c}
	\end{subfigure}
	\begin{subfigure}[t]{0.35\textwidth}
		\centering
		\includegraphics[width=\textwidth]{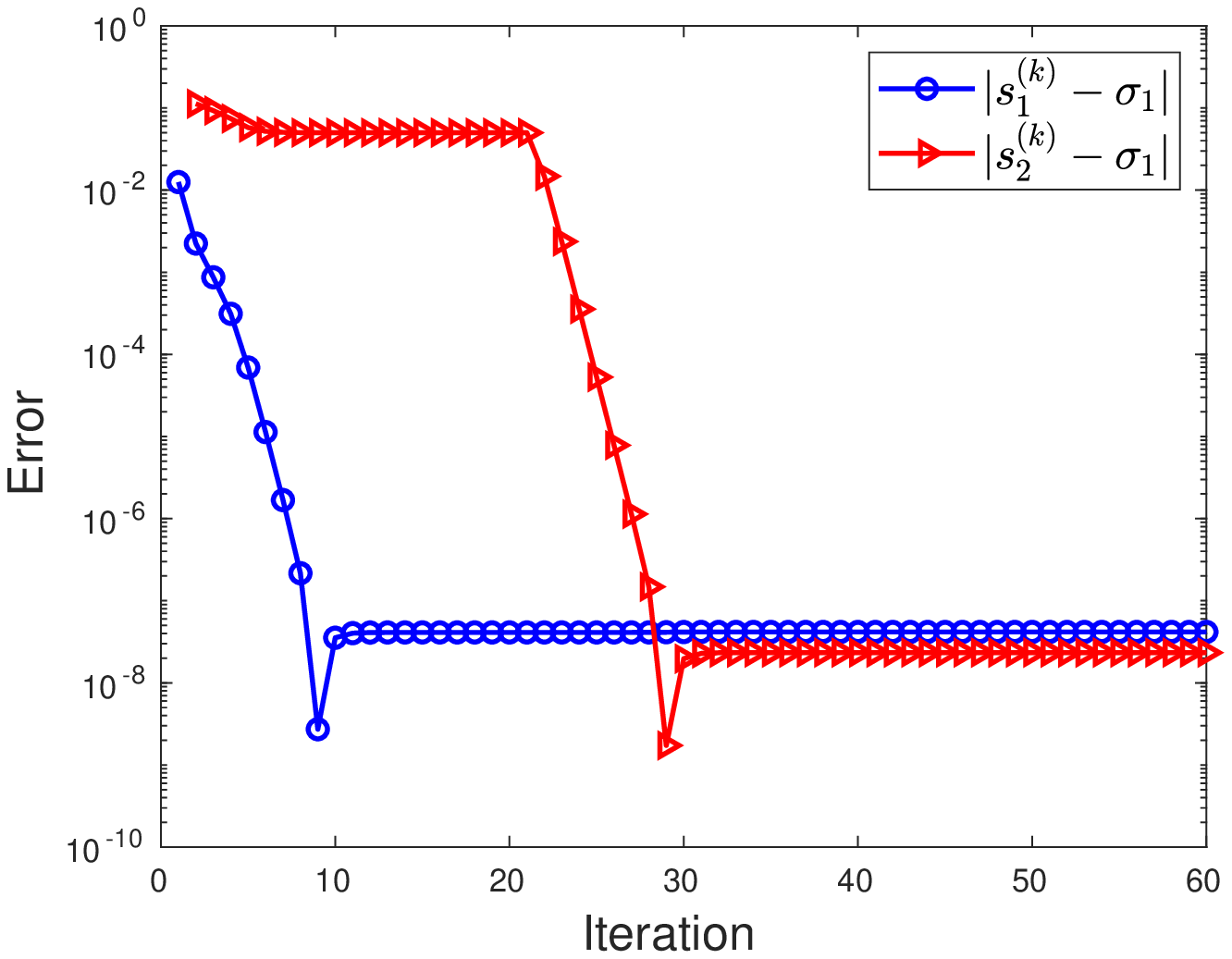}
		\caption{}
		\label{subfig3d}
	\end{subfigure}
	\caption{Convergence and accuracy of approximate largest singular values: (a), (b) full reorthogonalization in double precision arithmetic; (c), (d) full reorthogonalization in single precision arithmetic.}
	\label{fig3}
\end{figure}

Figure \ref{fig3} depicts the convergence history of the first four largest approximations and the error curve corresponding to the 2-multiplicity singular value $\sigma_{1}$. The right vertical lines in subfigures (\subref{subfig3a}) and (\subref{subfig3c}) indicate the values of $\sigma_{i}$ for $i=1,\dots, n$. The experimental results for smallest approximations are similar and we omit them. Subfigures (\subref{subfig3a}) and (\subref{subfig3c}) show that the multiplicities of $\sigma_{1}$ can be determined correctly from the convergence history of $s_{1}^{(k)}$ and $s_{2}^{(k)}$. However, we can also find that $s_{1}^{(k)}$ is not rigorously equal to $s_{2}^{(k)}$ although they are both used to approximate $\sigma_{1}$. This can be observed more obviously in subfigure (\subref{subfig3d}) where the LBRO is performed in single precision arithmetic with roundoff unit $\mathbf{u}=2^{-24}\approx 5.96\times 10^{-8}$.

Table \ref{tab2} shows the errors of approximations to both the 2-multiplicity singular values $\sigma_{1}=\sigma_{2}$ and $\sigma_{n}=\sigma_{n-1}$. For approximations to $\sigma_{1}$ we set $k=100$, while the smallest two Ritz values converge to $\sigma_{n}$ more slowly and we set $k=250$ in this case. We can find that both $s_{1}^{(k)}$ and $s_{2}^{(k)}$ as well as $s_{k}^{(k)}$ and $s_{k-1}^{(k-1)}$ differs by a value of $O(\mathbf{u})$, which is consistent with the upper bound on $\|E\|$. For $\sigma_{n}=10^{-4}$, the relative errors of approximations are much bigger than that of $\sigma_{1}=1.0$ due to a large value of $\sigma_{1}/\sigma_{n}$, which is consistent with \eqref{5.2}.

\begin{table}[htp]
	\centering
	\caption{Accuracy of approximate singular values computed in double and single precision arithmetic.}
	\begin{tabular}{l|lll}
		\toprule
		Work precision & $|s_{1}^{(100)}-\sigma_{1}|/\sigma_{1}$	 & $|s_{2}^{(100)}-\sigma_{2}|/\sigma_{2}$ &$|s_{1}^{(100)}-s_{2}^{(100)}|$ \\   
		\hline
		double  & $2.22\times 10^{-16}$ & $2.22\times 10^{-16}$& $4.44\times 10^{-16}$ 	\\   
		single & $4.17\times 10^{-8}$ & $2.33\times 10^{-8}$ & $1.83\times 10^{-8}$  \\ 
		\hline
		\ & $|s_{250}^{(250)}-\sigma_{n}|/\sigma_{n}$	 & $|s_{249}^{(249)}-\sigma_{n-1}|/\sigma_{n-1}$ &$|s_{250}^{(250)}-s_{249}^{(249)}|$ \\  \hline
		double  & $1.30\times 10^{-12}$ & $1.08\times 10^{-12}$& $2.38\times 10^{-16}$	\\  
		single & $7.93\times 10^{-6}$ & $3.27\times 10^{-5}$ & $2.48\times 10^{-9}$ \\  
		\bottomrule 
	\end{tabular}
	\label{tab2}
\end{table}

\section{Conclusion} \label{sec6}
By writing various types of reorthogonalization strategies in a general form, we have made a backward error analysis of the LBRO in finite precision arithmetic. For the $k$-step LBRO($A,b$), we have shown that: (1). the computed $B_k$ is the exact one generated by the $k$-step LB($A+E,b+\delta_{b}$), where $\|\delta_{b}\|/\|b\|=O(\mathbf{u})$ and $\|E\|/\|A\|$ has the same order of magnitude as the sum of $\mathbf{u}$ and orthogonality levels of $U_{k+1}$ and $V_{k+1}$; (2). if we denote the two orthonormal matrices generated by LB($A+E,b+\delta_{b}$) in exact arithmetic by $\bar{U}_{k+1}$ and $\bar{V}_{k+1}$, respectively, then the first order errors of $\|U_{k+1}-\bar{U}_{k+1}\|$ and $\|V_{k+1}-\bar{V}_{k+1}\|$ are of the same orders of magnitude as orthogonality levels of $U_{k+1}$ and $V_{k+1}$, respectively. Thus the $k$-step LBRO is mixed forward-backward stable as long as the orthogonality of $U_{k+1}$ and $V_{k+1}$ are good enough. This result is then used to show backward stability of the LBRO based SVD computation algorithm and LSQR algorithm. Several numerical experiments are made to confirm the results.

\section*{Acknowledgements}
The authors would like to thank Dr. Long Wang and Professor Weile Jia for their generous supports and stimulating discussions.


\bibliographystyle{elsarticle-harv} 
\bibliography{refs}

%
%
\end{document}